\theoremstyle{plain}
\newtheorem*{thmA}{Theorem A}
\newtheorem{thm}{Theorem}[section]
\newtheorem{lem}[thm]{Lemma}
\newtheorem{cor}[thm]{Corollary}
\theoremstyle{definition}
\newtheorem{rmk}[thm]{Remark}
\newcommand{\F}{\mathbb{F}}
\DeclareMathOperator{\cl}{cl}
\begin{document}

\title[Metabelian thin Beauville $p$-groups]{Metabelian thin Beauville $p$-groups}

\author[N.\ Gavioli]{Norberto Gavioli}
\address{Universit\`a degli Studi dell'Aquila\\
	L'Aquila, Italy	}
\email{gavioli@univaq.it}

\author[\c{S}.\ G\"ul]{\c{S}\"ukran G\"ul}
\address{Department of Mathematics\\ Middle East Technical University\\
	06800 Ankara, Turkey}
\email{gsukran@metu.edu.tr}

\author[C.\ Scoppola]{Carlo Scoppola}
\address{Universit\`a degli Studi dell'Aquila\\
		L'Aquila, Italy}
\email{scoppola@univaq.it}

\keywords{Beauville groups; metabelian thin $p$-groups \vspace{3pt}}

\thanks{The second author is supported by the Spanish Government, grant MTM2014-53810-C2-2-P, and the Basque Government, grant IT974-16}.

\begin{abstract}
	A non-cyclic finite $p$-group $G$ is said to be \emph{thin} if every normal subgroup of $G$ lies between two consecutive terms of the lower central series and  $|\gamma_i(G):\gamma_{i+1}(G)|\le p^2$ for all $i\geq 1$.  In this paper, we  determine Beauville structures in metabelian thin $p$-groups.
\end{abstract}

\maketitle

\section{Introduction}

 \emph{Beauville surfaces} \cite[page 159]{bea} are complex surfaces of general type constructed from two orientable regular hypermaps of genus at least $2$, with the same automorphism groups.  A finite group which arises as an automorphism group linked to a Beauville surface is called a \emph{Beauville group}.

A group-theoretical formulation of Beauville groups can be given as follows. Given a finite group $G$ and a couple of elements $x,y \in G$, we define
\[
\Sigma(x,y)
=
\bigcup_{g\in G} \,
\Big( \langle x \rangle^g \cup \langle y \rangle^g \cup \langle xy \rangle^g \Big),
\]
that is, the union of all subgroups of $G$ which are conjugate to $\langle x \rangle$, to 
$\langle y \rangle$ or to $\langle xy \rangle$. Then $G$ is a Beauville group if and only if the following conditions hold:
\begin{enumerate}
	\item $G$ is a $2$-generator group.
	\item There exists a pair of generating sets $\{x_1,y_1\}$ and $\{x_2,y_2\}$ of $G$ such that 
	$\Sigma(x_1,y_1) \cap \Sigma(x_2,y_2)=1$.
\end{enumerate}
Then $ \{x_1,y_1\}$ and $\{x_2,y_2\}$ are said to form a \emph{Beauville structure\/} for $G$.

By using this characterization, one can study whether a given finite group is a Beauville group.
 For example, Catanese \cite{cat} showed that a finite abelian group is a Beauville group if and only if it is isomorphic to $C_n\times C_n$, with $n>1$ and $\gcd(n,6)=1$. On the other hand, a remarkable result, proved independently by Guralnick and Malle \cite{GM} and by Fairbairn, Magaard and Parker \cite{FMP} in 2012, is that every non-abelian finite simple group other than $A_5$ is a Beauville group.
 
 If $p$ is a prime, not much was known about Beauville $p$-groups until very recently (see \cite{BBF} and \cite{bos}). In \cite[Theorem 2.5]{FG}, Fern\'andez-Alcober and G\"ul extended Catanese's criterion  in the case of $p$-groups from abelian groups to a much wider family of groups, including all $p$-groups having a `good behaviour' with respect to taking powers, and in particular groups of class $<p$. 
 
 A non-cyclic finite $p$-group $G$ is said to be \emph{thin} if every normal subgroup of $G$ lies between two consecutive terms of the lower central series and  $|\gamma_i(G):\gamma_{i+1}(G)|\le p^2$ for all $i\geq 1$. Indeed, these groups are $2$-generator. Thus it is natural to ask whether they are Beauville or not. Furthermore, the study of thin $p$-groups is also motivated by the fact that they usually give examples of groups whose power structures are not so well-behaved.  Well-known examples of thin $p$-groups are $p$-groups of maximal class and quotients of the Nottingham group. In  \cite{FG}, all Beauville quotients of the Nottingham group were determined. Thanks to the ill-behaved power structure, the first explicit infinite family of Beauville $3$-groups was given by considering quotients of the Nottingham group over $\F_3$. In \cite{FG2},  the existence of Beauville structures in the most important families of groups of maximal class, in particular in metabelian $p$-groups of maximal class, was studied.
 
 The goal of this paper is to complete the study of Beauville structures in \break 
 metabelian thin $p$-groups. In this paper, we will exclude $p$-groups of maximal class from our consideration of thin groups. This means, in particular, that the prime $2$ is excluded \cite[Theorem III.11.9]{hup}.
 Then according to Theorem A in \cite{BCS}, if $G$ is a metabelian thin $p$-group,  then $\cl(G)\leq p+1$. If the class is  $<p$, then the existence of Beauville structures can be determined by using Corollary 2.6 in \cite{FG}. Thus we restrict to groups of class $p$ or $p+1$. The main result of this paper is as follows.

    \begin{thmA}
		Let $G$ be a metabelian thin $p$-group of class $p$ or $p+1$ for $p\geq 5$.
		Then there are four cases in which there is a Beauville structure:
		\begin{enumerate}
			\item
			$\cl(G)=p$ and $|\gamma_p(G)|=p^2$.
			\item
			$\cl(G)=p+1$.
			\item
			$\cl(G)=p$, $|\gamma_p(G)|=p$ and $G^p=\gamma_{p-1}(G)$.
			\item
			$\cl(G)=p$, $|\gamma_p(G)|=p$, $G^p=\gamma_p(G)$ and $G$ has at least three maximal subgroups of exponent
			$p$.
		\end{enumerate}
	\end{thmA}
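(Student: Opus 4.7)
The plan is to translate the Beauville condition into a question about cyclic subgroups and $p$-th powers in the Frattini quotient $G/\Phi(G)\cong\F_p^2$. For any generating pair $\{x,y\}$ the three subgroups $\langle x\rangle,\langle y\rangle,\langle xy\rangle$ lie in three distinct maximal subgroups of $G$, one per line in $G/\Phi(G)$. A first necessary condition for a Beauville structure is that the six lines arising from the two pairs be pairwise distinct; since a $2$-generator $p$-group has exactly $p+1$ maximal subgroups, this is possible precisely when $p+1\geq 6$, explaining the hypothesis $p\geq 5$. Once the lines are distinct, the intersection of a cyclic subgroup from one pair with a cyclic subgroup from the other lies inside $\Phi(G)$, so all potential obstructions to the Beauville condition come from the $p$-th powers of the chosen generators.

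The next step is to encode these $p$-th powers as linear data on $G/\Phi(G)$ modulo the lower central series. Using Hall's collection formula together with the metabelian hypothesis, the map $g\Phi(G)\mapsto g^p\gamma_p(G)$ is essentially $\F_p$-linear from $G/\Phi(G)$ into the appropriate quotient of $G^p$, with a refined version available modulo $\gamma_{p+1}(G)$ up to controlled commutator corrections. The thin bound $|\gamma_i(G):\gamma_{i+1}(G)|\leq p^2$, combined with the fact that $G$ acts on each factor $\gamma_i(G)/\gamma_{i+1}(G)$ through the abelian quotient $G/\gamma_2(G)$, ensures that each $G$-conjugacy class of a $p$-th power is a single coset in a low-dimensional $\F_p$-space. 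The Beauville question is thereby reduced to a finite combinatorial problem: choose two generating pairs whose images and subsequent $p$-th powers are mutually transversal in the relevant $\F_p$-quotients.

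The four cases are then dispatched as follows. In (i) and (ii) the additional layer provided by $|\gamma_p(G)|=p^2$ or by $\gamma_{p+1}(G)$ gives enough freedom to exhibit explicit generating pairs whose $p$-th powers are pairwise non-conjugate. Case (iii), in which $G^p=\gamma_{p-1}(G)$, is the easiest: the $p$-th power map is as large as possible, so conjugacy classes of $p$-th powers are maximally spread out and transversality can be achieved freely. Case (iv) is the most delicate: with $|\gamma_p(G)|=p$ and $G^p=\gamma_p(G)$, the $p$-th power map collapses into a non-zero $\F_p$-linear form $\ell\colon G/\Phi(G)\to\gamma_p(G)\cong\F_p$ whose kernel is precisely the union of those lines whose corresponding maximal subgroups have exponent $p$. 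One picks both generating pairs from distinct exponent-$p$ lines, which is possible exactly when at least three maximal subgroups have exponent $p$.

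The main obstacle is the converse direction in case (iv). One must show that if at most two maximal subgroups have exponent $p$, then every choice of two generating pairs is forced to produce $p$-th powers lying non-trivially in the central cyclic subgroup $\gamma_p(G)$ on both sides. Since $\gamma_p(G)$ has only $p-1$ non-trivial elements and $\gamma_p(G)\leq Z(G)$ has trivial $G$-conjugation, a short pigeonhole-type argument, combined with the linearity of $\ell$ and the constraint that each generating pair contains at least one generator outside $\ker\ell$, will force a coincidence between $\Sigma(x_1,y_1)$ and $\Sigma(x_2,y_2)$, ruling out any Beauville structure.
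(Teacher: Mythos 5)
Your overall architecture (reduce everything to $p$-th powers relative to the lower central series, treat the four cases separately, lift Beauville structures from quotients) matches the paper's, but your load-bearing step is wrong. You assert that $g\Phi(G)\mapsto g^p\gamma_p(G)$ is ``essentially $\F_p$-linear'' with ``controlled commutator corrections'' modulo $\gamma_{p+1}(G)$. It is not, and this is exactly where the real work of cases (i) and (ii) lies. There one must separate $p$-th powers inside $\gamma_p(G)/\gamma_{p+1}(G)\cong\F_p^{\,2}$, and the Hall--Petrescu/Miech expansion gives
\[
(x^ty)^p\equiv (x^p)^ty^p\,[y,x,_{p-2}y]^{\frac{-2t}{1-ht^2}}\,[y,x,_{p-3}y,x]^{\frac{2t^2}{1-ht^2}}\pmod{\gamma_{p+1}(G)},
\]
where $h$ is the quadratic non-residue in the Brandl--Caranti--Scoppola relation $[y,x,x,x]\equiv[y,x,y,y]^h \pmod{\gamma_5(G)}$. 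The exponents are rational, not linear, functions of $t$ (the non-residue property of $h$ is what makes $1-ht^2$ invertible and turns $\sum_s (ht^2)^{s-1}$ into $2/(1-ht^2)$). The paper's key lemma shows that $\langle (x^{t_0}y)^p\rangle\equiv\langle (x^ty)^p\rangle \pmod{\gamma_{p+1}(G)}$ can happen for at most three values of $t$, because the coincidence condition reduces to a polynomial of degree at most $3$ in $t$. Without this collision bound your ``transversality can be achieved'' claim in (i)--(ii) has no support; and even with it the paper still needs a three-way case analysis on how the subgroups $M^p$ cluster, including a GAP computation to exclude a bad configuration when $p=5$. Nothing in your sketch produces these ingredients.

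Two further problems. First, six pairwise distinct lines in $G/\Phi(G)$ is not a \emph{necessary} condition for a Beauville structure (two generators lying in the same maximal subgroup can still generate cyclic subgroups meeting trivially); it is only the convenient sufficient device that both you and the paper exploit, so your justification of the hypothesis $p\geq 5$ via $p+1\geq 6$ is a heuristic, not a proof of necessity. Second, in case (iv) you cannot literally ``pick both generating pairs from distinct exponent-$p$ lines'': three maximal subgroups of exponent $p$ supply only one full triple of order-$p$ elements. The paper takes one triple entirely of order $p$ and a second triple merely independent modulo $G'$; the order-$p$ property of the first triple alone forces all the intersections to be trivial. Your converse sketch for (iv) is essentially right (every generating triple meets a maximal subgroup of exponent $p^2$, whose $p$-th powers generate the unique minimal normal subgroup $G^p=\gamma_p(G)$, so the two $\Sigma$-sets share it), and your description of case (iii) is close to the paper's argument, which passes to the regular quotient $G/\gamma_p(G)$, applies the known criterion there, and lifts.
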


	\vspace{10pt}

	\noindent
	\textit{Notation.\/}
	We use standard notation in group theory. If $G$ is a group, $H,K \leq G$ and $N\unlhd G$, then $H\equiv K \pmod{N}$ means $HN/N=KN/N$. If $p$ is a prime, then we write $G^{p^i}$ for the subgroup generated by all powers $g^{p^i}$ as $g$ runs over $G$, and $\Omega_i(G)$ for the subgroup generated by the elements of $G$ of order at most $p^i$. The exponent of $G$, denoted by $\exp G$, is the maximum of the orders of all elements of $G$.

\section{Proof of the main result}


In this section, we give the proof of Theorem A. We begin by giving some properties of metabelian thin $p$-groups.  Firstly, we recall the following more general result of Meier-Wunderli. If $G$ is a metabelian $2$-generator $p$-group, then 
\begin{equation}
\label{wunderli}
G^p\geq \gamma_p(G)
\end{equation}
(see \cite{wun}, Theorem 3).

Observe that the only thin abelian $p$-group is the elementary abelian group of order $p^2$ and we refer to its lattice of normal subgroups as a \emph{diamond\/}.  It then follows that if $G$ is thin, then $G/G'$ is elementary abelian of order $p^2$, and  hence $G$ is $2$-generator. Also,  the lower and upper central series of a thin $p$-group coincide \cite[Corollary 2.2]{BCS}.

Note that if $G$ is a thin $p$-group and $g\in\gamma_i(G)\smallsetminus \gamma_{i+1}(G)$, then
\begin{equation}
\label{covering property}
\gamma_{i+1}(G)=[g,G]\gamma_{i+2}(G)
\end{equation}
(see \cite{BCS}, Lemma 2.1).

Now let $G$ be a metabelian thin $p$-group. By Theorem A in \cite{BCS}, we have the following:
\begin{enumerate}
	\item $\gamma_{p+1}(G)$ is cyclic and $\gamma_{p+2}(G)=1$.
	\item  The lattice of normal subgroups of $G$ consists of a diamond on top, followed by a chain of length $1$, at most $p-2$ diamonds, plus possibly another chain of length $1$.
\end{enumerate}

As a consequence, $\cl(G)\leq p+1$, and $|G|\leq p^{2p}$.  We next recall the power structure of a metabelian thin $p$-group.

\begin{lem}
	\label{place of G^p}
	Let $G$ be a metabelian thin $p$-group, and $l$ be the largest integer such that $G^p\leq \gamma_{l}(G)$. Then $3\leq l \leq p$, $\gamma_{l+1}(G)$ is cyclic, $\gamma_{l+2}(G)=1$ and $\gamma_2(G)^p\leq \gamma_{l+1}(G)$.
\end{lem}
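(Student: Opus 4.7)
The core tool is the metabelian commutator identity
\[
[g^p, h] \;=\; \prod_{i=0}^{p-1}[g,h]^{g^{i}} \;\equiv\; [g, h]^{(\bar g - 1)^{p-1}}\pmod{\gamma_2(G)^p},
\]
valid for $g,h$ in a $2$-generator metabelian group; the congruence uses $\binom{p}{k}\equiv 0\pmod p$ for $0<k<p$. Writing $\alpha=\bar a-1$, $\beta=\bar b-1$ in $\F_p[G/\gamma_2(G)]\cong\F_p[\alpha,\beta]/(\alpha^p,\beta^p)$, whenever $g\notin\gamma_2(G)$ the element $(\bar g-1)^{p-1}$ has $(\alpha,\beta)$-degree exactly $p-1$, so $[g^p,h]\in\gamma_{p+1}(G)\cdot\gamma_2(G)^p$.

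The bound $l\le p$ is immediate from \eqref{wunderli}, and $G^p\le\gamma_2(G)$ is automatic since $G/G'$ is elementary abelian of order $p^2$. For $l\ge 3$, use $|\gamma_2/\gamma_3|=p$ (the chain of length $1$ after the top diamond) together with the class-$2$ formula $(xy)^p\equiv x^py^p\pmod{\gamma_3(G)}$: the $p$-th power map descends to a homomorphism $G/\gamma_2\to\gamma_2/\gamma_3$. If it is nonzero, relabel generators so that $a^p\equiv[a,b]^k\pmod{\gamma_3(G)}$ with $k\not\equiv 0\pmod p$. Bilinearity yields $[a^p,b]\equiv k\,[[a,b],b]\pmod{\gamma_4(G)}$; the identity yields $[a^p,b]\in\gamma_{p+1}(G)\cdot\gamma_2(G)^p\le\gamma_4(G)$ (using $p\ge 3$ together with the forthcoming $\gamma_2(G)^p\le\gamma_{l+1}(G)\le\gamma_4(G)$). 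Since we are outside maximal class the level-$3$ section is a diamond and $[[a,b],a],[[a,b],b]$ form an $\F_p$-basis of $\gamma_3/\gamma_4$, so $[[a,b],b]\notin\gamma_4(G)$; hence $k\equiv 0\pmod p$, contradiction.

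For the remaining three assertions I would invoke the covering property \eqref{covering property}. Pick $x\in G^p$ with $x\in\gamma_l(G)\setminus\gamma_{l+1}(G)$; then $\gamma_{l+1}(G)=[x,G]\gamma_{l+2}(G)$. Since $x$ is a product of $p$-th powers, the identity packs $[x,G]$ into $\gamma_{p+1}(G)\cdot\gamma_2(G)^p$; granting $\gamma_2(G)^p\le\gamma_{l+1}(G)$, this reduces to $\gamma_{l+1}(G)\le\gamma_{p+1}(G)\gamma_{l+2}(G)$. Since $\gamma_{p+1}(G)$ is cyclic by BCS, a lattice argument in the thin structure bootstraps this to $\gamma_{l+1}(G)\le\gamma_{p+1}(G)$, giving both the cyclicity of $\gamma_{l+1}(G)$ and $\gamma_{l+2}(G)\le\gamma_{p+2}(G)=1$. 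For $\gamma_2(G)^p\le\gamma_{l+1}(G)$: applying the identity to $z\in\gamma_2(G)$ degenerates to $[z^p,g]=[z,g]^p$, which iterates to $1$ by nilpotency, so $z^p\in Z(G)$; combining $z^p\in G^p\le\gamma_l(G)$ with the thin-lattice placement of central characteristic elements in the section $\gamma_l/\gamma_{l+1}$ gives $z^p\in\gamma_{l+1}(G)$.

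The main obstacle is the mutual dependence between the last two claims: the argument for $\gamma_{l+2}(G)=1$ needs the bound $\gamma_2(G)^p\le\gamma_{l+1}(G)$ to absorb the $\gamma_2(G)^p$ contribution in $[G^p,G]$, while the latter in turn needs the structure of $\gamma_l(G)/\gamma_{l+1}(G)$ to localize central $p$-th powers. I would resolve this by simultaneous induction on $\cl(G)$, using the identity above and the thin lattice in tandem to tighten both bounds at each step; an alternative would be to first show from the thin structure that $\gamma_2(G)$ has exponent $p$, which would collapse the $\gamma_2(G)^p$ contribution outright and decouple the two arguments.
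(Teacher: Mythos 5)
The paper does not actually prove this lemma: it simply cites Lemmas 1.2, 1.3 and 3.3 of Brandl--Caranti--Scoppola, so any self-contained argument is necessarily a different route from the paper's. Your core identity $[g^p,h]\equiv[g,h]^{(\bar g-1)^{p-1}}\pmod{\gamma_2(G)^p}$ is correct and is the right engine, and the consequence $[G^p,G]\le\gamma_{p+1}(G)\gamma_2(G)^p$ is sound. The bound $l\le p$ via Meier--Wunderli is fine (granting, as the statement implicitly must, that $G^p\neq 1$), and the $p$-th power map does descend to a homomorphism $G/\gamma_2(G)\to\gamma_2(G)/\gamma_3(G)$ because $|\gamma_2(G):\gamma_3(G)|=p$ already forces $\gamma_2(G)^p\le\gamma_3(G)$.

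However, three of the four conclusions rest on steps that do not go through as written. First, in the proof of $l\ge3$ you need $\gamma_2(G)^p\le\gamma_4(G)$ and justify it by ``the forthcoming $\gamma_2(G)^p\le\gamma_{l+1}(G)\le\gamma_4(G)$'' --- but $\gamma_{l+1}(G)\le\gamma_4(G)$ is exactly $l\ge3$, the statement under proof. Second, the claim that $[z^p,g]=[z,g]^p$ ``iterates to $1$ by nilpotency, so $z^p\in Z(G)$'' is not a proof: iterating only gives $[z^p,{}_{j}\,G]\le\gamma_{2+j}(G)^p$, which becomes vacuous at the top of the central series; $z^p\in Z(G)$ would require $[z,g]^p=1$, i.e.\ that the commutators of $\gamma_2(G)$ with $G$ have order dividing $p$, which is not established (and your fallback of showing $\exp\gamma_2(G)=p$ would make the conclusion $\gamma_2(G)^p\le\gamma_{l+1}(G)$ vacuously true, which is not how BCS state it, so it is unlikely to hold in general). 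Third, in the step for cyclicity, from $\gamma_{l+1}(G)=[x,G]\gamma_{l+2}(G)\le\gamma_{p+1}(G)\gamma_2(G)^p\gamma_{l+2}(G)$, absorbing $\gamma_2(G)^p$ into $\gamma_{l+1}(G)$ only returns the tautology $\gamma_{l+1}(G)\le\gamma_{p+1}(G)\gamma_{l+1}(G)$; to reach $\gamma_{l+1}(G)\le\gamma_{p+1}(G)\gamma_{l+2}(G)$ you would need the stronger containment $\gamma_2(G)^p\le\gamma_{p+1}(G)\gamma_{l+2}(G)$, which you have not shown. You flag the mutual dependence yourself, but the proposed ``simultaneous induction on $\cl(G)$'' is not carried out, so the argument is incomplete precisely where the real work lies; as it stands it does not replace the citation to BCS.
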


\begin{proof}
	See the proofs of Lemmas 1.2, 1.3 and 3.3 in \cite{BCS}.
\end{proof}

The next corollary follows directly from Lemma \ref{place of G^p}.

\begin{cor}
	Let $G$ be a metabelian thin $p$-group. Then $|G^p|\leq p^3$.
\end{cor}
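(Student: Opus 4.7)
The plan is to get the bound by first containing $G^p$ in $\gamma_l(G)$ and then estimating $|\gamma_l(G)|$ one lower-central factor at a time. By Lemma~\ref{place of G^p}, with $l$ as in the definition there, we already have the containment $G^p\le \gamma_l(G)$, so it suffices to show $|\gamma_l(G)|\le p^3$. The natural decomposition is
\[
|\gamma_l(G)| = |\gamma_l(G):\gamma_{l+1}(G)|\cdot|\gamma_{l+1}(G)|,
\]
and the first factor on the right is at most $p^2$ by the very definition of a thin $p$-group.

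Hence the whole proof reduces to establishing that $|\gamma_{l+1}(G)|\le p$. Lemma~\ref{place of G^p} already supplies two of the three ingredients needed for this: $\gamma_{l+1}(G)$ is cyclic and $\gamma_{l+2}(G)=1$. The remaining ingredient is that, in a metabelian thin $p$-group, the factor $\gamma_{l+1}(G)/\gamma_{l+2}(G)$ cannot be cyclic of order $p^2$. I would derive this from the lattice description of the normal subgroups of a metabelian thin $p$-group recalled at the beginning of the section (Theorem~A of \cite{BCS}): the lattice is built exclusively from chains of length $1$ and from diamonds, so every section $\gamma_i(G)/\gamma_{i+1}(G)$ is either trivial, of order $p$, or elementary abelian of order $p^2$; in particular no cyclic $C_{p^2}$ section is allowed. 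Applied to $i=l+1$ together with the cyclicity from Lemma~\ref{place of G^p}, this forces $|\gamma_{l+1}(G)|\le p$.

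Combining the two estimates yields $|\gamma_l(G)|\le p^2\cdot p = p^3$, and then $|G^p|\le|\gamma_l(G)|\le p^3$, as claimed. The main obstacle, if one wants to call it that, is precisely the step $|\gamma_{l+1}(G)|\le p$: everything else is either the definition of thinness or a direct quotation of the preceding lemma. An alternative justification of that step, should one prefer not to invoke the lattice description, is the standard commutator calculation: since $G/G'$ is elementary abelian of order $p^2$ (observed at the start of the section), Hall's identities give $[a,c]^p\equiv[a^p,c]\pmod{\gamma_{i+1}(G)}$ for $a\in G$, $c\in\gamma_{i-1}(G)$ and $i\ge 2$, and then $a^p\in\gamma_2(G)$ places $[a^p,c]$ in $[\gamma_2(G),\gamma_{i-1}(G)]\le\gamma_{i+1}(G)$, so every factor $\gamma_i(G)/\gamma_{i+1}(G)$ with $i\ge 2$ has exponent $p$; a cyclic group of exponent $p$ has order at most $p$, and we are done.
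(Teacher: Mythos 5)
Your proof is correct and takes the same route the paper intends when it says the corollary ``follows directly'' from Lemma~\ref{place of G^p}: contain $G^p$ in $\gamma_l(G)$, bound $|\gamma_l(G):\gamma_{l+1}(G)|$ by $p^2$ from the definition of thinness, and bound $|\gamma_{l+1}(G)|$ by $p$ using cyclicity together with the fact that the lower central factors admit no cyclic section of order $p^2$. Your explicit justification of that last step (via the lattice description, or alternatively via the exponent-$p$ property of the factors $\gamma_i(G)/\gamma_{i+1}(G)$ for $i\ge 2$) correctly supplies the detail the paper leaves implicit.
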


\begin{lem}
	\label{order of G^p}
	Let $G$ be a metabelian thin $p$-group such that its lattice of normal subgroups ends with a chain. Then the order of $G^p$ cannot be $p^2$.
\end{lem}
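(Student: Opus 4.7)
The plan is to proceed by contradiction, reducing to one structural case and then applying a ``$w$-trick'' to force all $p$-th powers into the centre. Combining the preceding corollary $|G^p|\le p^3$ with Meier--Wunderli~\eqref{wunderli} (which gives $G^p\supseteq\gamma_p(G)$), if $\cl(G)=p+1$ the ``ends with a chain'' hypothesis yields $|\gamma_p(G)|=p^3$ (one more diamond on top of the bottom chain), whence $|G^p|=p^3\ne p^2$. So we may assume $\cl(G)=p$ and $|\gamma_p(G)|=p$. By Lemma~\ref{place of G^p}, $l\in\{p-1,p\}$; if $l=p$ then $G^p=\gamma_p(G)$ has order $p$, so we may take $l=p-1$. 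Then $G^p\le\gamma_{p-1}(G)$ (of order $p^3$), $G^p\not\le\gamma_p(G)$, and $G^p/\gamma_p(G)$ is a one-dimensional $\F_p$-subspace of the diamond $\gamma_{p-1}(G)/\gamma_p(G)\cong\F_p^2$.

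Since $G/\gamma_p(G)$ has class $p-1<p$, it is a regular $p$-group, and combining Hall--Petresco with $\gamma_2(G)^p\le\gamma_p(G)$ (Lemma~\ref{place of G^p}) yields $(uv)^p\equiv u^pv^p\pmod{\gamma_p(G)}$ for all $u,v\in G$. Hence the $p$-th power map
\[
\phi\colon G/\gamma_2(G)\longrightarrow \gamma_{p-1}(G)/\gamma_p(G),\qquad g\gamma_2(G)\longmapsto g^p\gamma_p(G),
\]
is a well-defined $\F_p$-linear homomorphism with image $G^p/\gamma_p(G)$. Because $\imm\phi$ is one-dimensional and nonzero, $\ker\phi$ is a proper subspace of $G/\gamma_2(G)\cong\F_p^2$, and one may pick generators $x,y$ of $G$ with both $\overline{x^p}$ and $\overline{y^p}$ nonzero; then $\overline{y^p}=(\overline{x^p})^{a}$ for some $a\in\F_p^{\ast}$.

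Set $w=yx^{-a}$. By linearity of $\phi$, $\overline{w^p}=\overline{y^p}\,(\overline{x^p})^{-a}=1$, so $w^p\in\gamma_p(G)=Z(G)$; the equality $Z(G)=\gamma_p(G)$ holds since the upper and lower central series of a thin $p$-group coincide. Now $w^p\equiv y^px^{-ap}\pmod{\gamma_p(G)}$, so $w^p=y^px^{-ap}z$ with $z\in\gamma_p(G)\subseteq Z(G)$; applying $[uv,t]=[u,t]^v[v,t]$ twice, one computes
\[
1=[w^p,x]=[y^p,x]\qquad\text{and}\qquad 1=[w^p,y]=[x^p,y]^{-a}.
\]
Since $a\in\F_p^{\ast}$, $[x^p,y]=1$, and combined with $[y^p,x]=[x^p,x]=[y^p,y]=1$ this forces $x^p,y^p\in Z(G)=\gamma_p(G)$, i.e.\ $\overline{x^p}=\overline{y^p}=1$, contradicting the choice of $x,y$. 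Therefore $|G^p|\ne p^2$.

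The main obstacle (and the clever step) is recognising this $w$-trick: once the $p$-th power map is promoted to a homomorphism via the regularity of $G/\gamma_p(G)$ and $\gamma_2(G)^p\le\gamma_p(G)$ from Lemma~\ref{place of G^p}, a hypothetical one-dimensional image forces a linear relation between $\overline{x^p}$ and $\overline{y^p}$, after which $w=yx^{-a}$ lies in $Z(G)$ on $p$-th powering and the commutator identities above do the rest. All other steps --- the case split on $\cl(G)$ and $l$, the identification of $\imm\phi$ with $G^p/\gamma_p(G)$, and the commutator book-keeping --- are routine consequences of Meier--Wunderli, the preceding corollary, and Lemma~\ref{place of G^p}.
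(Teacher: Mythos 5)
Your argument for the case you actually treat is correct, and it is genuinely different from the paper's: where the paper works with regularity of the maximal subgroups, the identity $|M^p|=|M:\Omega_1(M)|$ and the conclusion $\exp G'=p$ to force every $M^p$ (hence $G^p$) into the last term of the lower central series, you linearize the $p$-th power map $G/\gamma_2(G)\to\gamma_{p-1}(G)/\gamma_p(G)$ via Hall--Petrescu and $\gamma_2(G)^p\le\gamma_p(G)$, and then use the element $w=yx^{-a}$ with $w^p\in Z(G)$ to push $x^p$ and $y^p$ into the centre. The commutator step is sound (note only that $[w^p,y]=1$ literally gives $[(x^p)^{-a},y]=1$ rather than $[x^p,y]^{-a}=1$; since $p\nmid a$ this still yields $[x^p,y]=1$), and the contradiction with $\overline{x^p}\neq 1$ is genuine.

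The gap is in the case analysis. The lemma assumes only that the lattice of normal subgroups ends with a chain; it does not assume $\cl(G)\in\{p,p+1\}$. After disposing of class $p+1$ you write ``so we may assume $\cl(G)=p$ and $|\gamma_p(G)|=p$,'' but nothing rules out $\cl(G)=c$ with $4\le c\le p-1$ (for $c\le 3$ the statement is immediate from Lemma \ref{place of G^p}, since $l\ge 3$ forces $G^p\le\gamma_3(G)$, which has order $p$ when the lattice ends with a chain). Moreover, for $c<p$ you cannot invoke Meier--Wunderli \eqref{wunderli} to get $\gamma_c(G)\le G^p$, since $\gamma_p(G)=1$ there; you need instead to argue that $G^p$ is a normal subgroup with $\gamma_c(G)<G^p\le\gamma_{c-1}(G)$ by thinness. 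With that substitution your $w$-trick does go through verbatim with $p$ replaced by $c$ (the quotient $G/\gamma_c(G)$ still has class $c-1<p$ and $\gamma_2(G)^p\le\gamma_c(G)$ still holds by Lemma \ref{place of G^p}), so the gap is repairable --- but as written the proof does not establish the lemma in the generality in which it is stated, whereas the paper's argument handles all classes $c\le p$ uniformly.
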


\begin{proof}
	If $G$ is of class $p+1$, then $G^p=\gamma_p(G)$, and hence $|G^p|=p^3$. Thus we assume that $\cl(G)=c\leq p$.  Next observe that if $M$ is a maximal subgroup of $G$, then $\cl(M)<c\leq p$, and so $M$ is regular.
	
	Now suppose, on the contrary, that $|G^p|=p^2$. Consider the quotient group $\overline{G}= G/ \gamma_c(G)$, which is regular. Then $|\overline{G}^p|=p$, and hence $|\overline{G}: \Omega_1(\overline{G})|=p$. Write $\Omega_1(\overline{G})=M/\gamma_c(G)$ for some maximal subgroup $M$ of $G$. Since $\overline{G}$ is regular,  $\exp \ \Omega_1(\overline{G})=p$, and hence $M^p\leq \gamma_c(G)$. This implies that 
	 $|M^p|= |M:\Omega_1(M)|\leq p$ as $M$ is regular. It then follows that 
	  $G'\leq \Omega_1(M)$, and thus $\exp \ G'=p$.
	
	On the other hand, if $M$ is an arbitrary maximal subgroup of $G$, we have $G'\leq M$ and since $\exp G' =p$, we get $G' \leq \Omega_1(M) \leq M$. Then $|M^p|=|M:\Omega_1(M)|\leq p$. Since $G$ is thin, this implies that $M^p \leq \gamma_c(G)$ for any maximal subgroup $M$. But $G^p= \langle M^p \mid \text{$M$ maximal in $G$} \rangle \leq \gamma_c(G)$. Thus $|G^p|\leq p$, which is a contradiction.
\end{proof}

We finally recall a commutator relation between the generators of $G$. More specifically, if $G$ is a metabelian thin $p$-group, then to every $x\in G\smallsetminus G'$ there corresponds a $y$ such that $G=\langle x,y \rangle$ and 
\begin{equation}
\label{quadratic-rel}
[y,x,x,x] \equiv [y,x,y,y]^h \pmod{\gamma_5(G)}.
\end{equation}
where $h$ is a quadratic non-residue modulo $p$ \cite[Theorem B]{BCS}.

Before we proceed to prove Theorem A, we will determine which metabelian thin $3$-groups are Beauville groups.

\begin{thm}
\label{p=3}
A metabelian thin $3$-group is a Beauville group if and only if it is one of $SmallGroup(3^5,3)$, $SmallGroup(3^6,34)$, or  $SmallGroup(3^6,37)$.
\end{thm}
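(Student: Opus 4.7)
The plan is to combine the structural narrowing coming from Lemma~\ref{place of G^p}, Lemma~\ref{order of G^p}, and the Meier--Wunderli bound \eqref{wunderli} with a finite case-by-case inspection. For $p=3$ any metabelian thin $3$-group satisfies $\cl(G)\leq p+1=4$ and $|G|\leq 3^{2p}=729$, so up to isomorphism there are only finitely many candidates. The bound \eqref{wunderli} reads $G^3\geq\gamma_3(G)$ for every $2$-generator metabelian $3$-group, while Lemma~\ref{place of G^p} gives $G^3\leq\gamma_3(G)$ as soon as $\cl(G)\geq 3$; hence in every class-$\geq 3$ case one has the sharp equality $G^3=\gamma_3(G)$. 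Combined with the lattice description (Theorem~A of \cite{BCS}) and Lemma~\ref{order of G^p}, this produces a short explicit list of candidate SmallGroup IDs, organized by class and by the order of $\gamma_3(G)$.

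To detect Beauville structures on each candidate, I would exploit the $p=3$ arithmetic: a $2$-generator $3$-group has exactly $p+1=4$ maximal subgroups, in bijection with the four points of $\P^1(\F_3)$; a generating pair $\{x,y\}$ picks out three of these directions (those of $x$, $y$, $xy$ modulo $\Phi(G)$), so any two generating pairs share at least two directions. Consequently $\Sigma(x_1,y_1)\cap\Sigma(x_2,y_2)=1$ forces, along every shared direction, the corresponding cyclic subgroup to have trivial $3$rd power. I would use this as the main obstruction to rule out all candidates except the three listed; in particular it explains why the phenomenon appearing in part~(iv) of Theorem~A (demanding at least three maximal subgroups of exponent $p$) becomes forced rather than optional at $p=3$.

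For the positive direction, that $SmallGroup(3^5,3)$, $SmallGroup(3^6,34)$ and $SmallGroup(3^6,37)$ really are Beauville, I would exhibit explicit Beauville structures starting from polycyclic presentations of these three groups, using the commutator relation \eqref{quadratic-rel} to compute the necessary powers and conjugates. The main obstacle is that at $p=3$ one cannot invoke the regular-power lemmas that drive the $p\geq 5$ case of Theorem~A (since $\cl(G)\geq p$ puts $G$ outside the range of Corollary~2.6 of \cite{FG}); both the negative and the positive parts must proceed by hand as sketched above. I expect the most delicate case to be class $3$ with $|\gamma_3(G)|=3$, where several candidates have very similar invariants and one must separate them using the precise position of $G^3$ and the distribution of exponents among the four maximal subgroups.
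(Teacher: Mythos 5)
Your proposal replaces a two-line citation with a substantial computational program, and as written the program has genuine gaps. The paper's own proof is essentially this: a metabelian thin $3$-group satisfies $|G|\leq 3^{2p}=3^{6}$, and the complete list of Beauville $3$-groups of order at most $3^{6}$ is already known from \cite{BBF} (the unique one of order $3^5$, namely $\texttt{SmallGroup}(3^5,3)$, and exactly three of order $3^6$, namely $n=34,37,40$); it then only remains to check with GAP which of these four are metabelian thin, and $n=40$ is excluded because $|Z(S)|=9$, so $Z(S)\neq\gamma_4(S)$ and $S$ is not thin. In particular no fresh enumeration of thin $3$-groups and no new Beauville verification is needed in either direction.

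The concrete gaps in your plan are the following. First, the central obstruction you propose for the negative direction is not a valid deduction: it is true that two generating triples occupy three of the four maximal subgroups each and hence share at least two directions, but along a shared direction the condition $\Sigma(x_1,y_1)\cap\Sigma(x_2,y_2)=1$ only forces $\langle a^g\rangle\cap\langle b^h\rangle=1$ for the two elements $a,b$ lying in that direction. When $o(a)=o(b)=9$ this amounts to $\langle a^3\rangle\neq\langle b^3\rangle^h$ for all $h$, which can perfectly well hold with both cubes nontrivial; it does \emph{not} force a trivial third power. A Lemma~\ref{companion}--type computation (which does apply here, since Lemma~\ref{place of G^p} gives $\gamma_2(G)^3\leq\gamma_4(G)$) only yields $\langle a^3\rangle\equiv\langle b^3\rangle \pmod{\gamma_4(G)}$, and $\gamma_4(G)$ may be nontrivial, so the contradiction you want does not follow without a group-by-group analysis. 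Second, neither the candidate list of thin $3$-groups nor the explicit Beauville structures for the three positive cases are actually produced, so the argument remains a programme rather than a proof; you yourself note that the regular-power machinery of \cite{FG} is unavailable at $p=3$, which is precisely why the paper falls back on the known classification in \cite{BBF} instead of arguing by hand.
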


\begin{proof}
Let $G$ be a metabelian thin $3$-group. Then  $|G|\leq 3^6$. Note that the smallest Beauville $3$-group $S=\texttt{SmallGroup}(3^5,3)$ is of order $3^5$,  and it is the only Beauville $3$-group of that order \cite{BBF}. Furthermore, by using the computer algebra system GAP \cite{GAP}, it can be seen that $S$ is  metabelian thin. Thus, if $|G|=3^5$, then $G$ is a Beauvile group if and only if $G\cong S$. 
We next assume that $|G|=3^6$. It has been shown in \cite{BBF} that there are only three Beauville $3$-groups of order $3^6$, namely  $S=\texttt{SmallGroup}(3^6,n)$ for $n=34,37,40$. However, if $n=40$ then $S$ is not thin since  $|Z(S)|=9$ and thus $Z(S)\neq \gamma_4(S)$. On the other hand, if $n=34$ or $37$ then again by using the computer algebra system GAP, one can show that $S$ is a metabelian thin $3$-group. Consequently, $G$ is a Beauville group if and only if $G\cong S$ for $n=34$ or $37$.
\end{proof}

Thus we assume that $p\geq 5$. Let $G$ be a metabelian thin $p$-group with $\cl(G)=p$ or $p+1$. Then we have three cases: $G$ is of class $p+1$, or  is of class $p$ and $|\gamma_p(G)|=p^2$, or is of class $p$ and $|\gamma_p(G)|=p$. In the first two cases,  we have $G^p \leq \gamma_p(G) $. It then follows from (\ref{wunderli}) that $G^p=\gamma_p(G)$. Note that we also have $\gamma_2(G)^p \leq \gamma_{p+1}(G)$, by Lemma \ref{place of G^p}. On the other hand, in the last case if $l$ is the largest integer satisfying $G^p\leq \gamma_l(G)$, then $l=p-1$ or $p$ and hence $\gamma_p(G)\leq G^p\leq \gamma_{p-1}(G)$.

Our first step is to calculate the $p$th powers of $x^ty$ modulo $\gamma_{p+1}(G)$ for all $0\leq t\leq p-1$ if $G=\langle x,y \rangle$ and $\gamma_2(G)^p \leq \gamma_{p+1}(G)$. To this purpose, we need the following lemma.

\begin{lem}\textup{\cite[Lemma 6]{mie} }
\label{power-rel}
Let $G$ be a metabelian $p$-group and $x,y \in G$. Set $\sigma_1=y$ and $\sigma_i=[\sigma_{i-1},x]$ for $i\geq 2$. Then
\[
(xy)^p= x^py^p\sigma_2^{\binom{p}{2}}\dots \sigma_p^{\binom{p}{p}}z,
\]
where
\[
z= \prod_{i=1}^{p-1}\prod_{j=1}^{p-1}[\sigma_{i+1},_j \sigma_1]^{C(i,j)},
\]
and
\[
C(i,j)=\sum_{k=1}^{p-1}\binom{k}{i}\binom{k}{j}.
\]
\end{lem}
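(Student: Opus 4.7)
The plan is to derive the identity by explicitly collecting factors on the right-hand side of $(xy)^p = xyxy\cdots xy$, exploiting the metabelian hypothesis to keep all commutator corrections under control. A routine induction on $n$, using $yx = xy^x$, yields
\[
(xy)^n = x^n \cdot y^{x^{n-1}} y^{x^{n-2}} \cdots y^x \cdot y,
\]
and a second induction on $k$, using $(ab)^x = a^x b^x$, $\sigma_j^x = \sigma_j\sigma_{j+1}$, and the fact that all $\sigma_j$ with $j\geq 2$ lie in the abelian group $G'$ (so they commute), gives
\[
y^{x^k} = \sigma_1 \cdot \prod_{i=1}^{k} \sigma_{i+1}^{\binom{k}{i}},
\]
the exponents collapsing under Pascal's rule.

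Writing $y^{x^k} = \sigma_1\cdot\tau_k$ with $\tau_k = \prod_{i=1}^k \sigma_{i+1}^{\binom{k}{i}}\in G'$ and substituting gives $(xy)^p = x^p \cdot \prod_{k=p-1}^{0}(\sigma_1\tau_k)$. The next step is to collect the $p$ copies of $\sigma_1 = y$ to the left. If $\sigma_1$ commuted with the $\tau_k$'s, the result would be $x^p y^p \prod_k \tau_k$; summing the exponent of $\sigma_{i+1}$ via the hockey-stick identity $\sum_{k=0}^{p-1}\binom{k}{i}=\binom{p}{i+1}$ would give $x^p y^p \sigma_2^{\binom{p}{2}} \cdots \sigma_p^{\binom{p}{p}}$, which is the ``commutative part'' of the stated identity.

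The residual factor $z$ records the non-commutativity of $\sigma_1$ with the $\sigma_{i+1}$. The tail $\tau_k$ in the $k$-th factor must be moved to the right past exactly $k$ subsequent copies of $\sigma_1$. Since $G'$ is abelian, the metabelian identity
\[
[\sigma_{i+1},\sigma_1^k] = \prod_{j=1}^{k}[\sigma_{i+1},_j\sigma_1]^{\binom{k}{j}}
\]
(proved by induction on $k$ using that every iterated commutator lies in $G'$) applies componentwise to $\tau_k=\prod_i\sigma_{i+1}^{\binom{k}{i}}$ and yields a correction $\prod_{i,j}[\sigma_{i+1},_j\sigma_1]^{\binom{k}{i}\binom{k}{j}}$. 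Summing over $k=1,\ldots,p-1$ gives the exponent $C(i,j)=\sum_{k=1}^{p-1}\binom{k}{i}\binom{k}{j}$ on $[\sigma_{i+1},_j\sigma_1]$ in $z$, with no further corrections since all these iterated commutators commute inside $G'$.

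The main obstacle is the bookkeeping in this last step: one must track precisely how many copies of $\sigma_1$ each $\sigma_{i+1}$-piece is pushed past, and check that the resulting corrections do not themselves trigger secondary corrections. The metabelian hypothesis is indispensable exactly here, since it forces every iterated commutator $[\sigma_{i+1},_j\sigma_1]$ into the abelian subgroup $G'$, so the collection process terminates with a clean product of such commutators and the exponents combine additively.
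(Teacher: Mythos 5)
Your argument is correct, and it is complete in all essentials: the expansion $(xy)^n=x^n\,y^{x^{n-1}}\cdots y^x\,y$, the binomial formula $y^{x^k}=\sigma_1\prod_{i=1}^{k}\sigma_{i+1}^{\binom{k}{i}}$ (which is just $(1+\delta)^k$ for the derivation $\delta\colon c\mapsto[c,x]$ acting on the abelian group $G'$), the hockey-stick summation $\sum_{k=0}^{p-1}\binom{k}{i}=\binom{p}{i+1}$ giving the factors $\sigma_{i+1}^{\binom{p}{i+1}}$, and the correction terms $[\sigma_{i+1},_j\sigma_1]^{\binom{k}{i}\binom{k}{j}}$ arising from pushing $\tau_k$ past the $k$ remaining copies of $\sigma_1$, whose exponents sum to $C(i,j)$. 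The two points you flag as delicate are indeed the only ones needing care, and both are handled by the observation that for $c\in G'$ and fixed $u$ the maps $c\mapsto c^u$ and $c\mapsto[c,u]$ are endomorphisms of the abelian group $G'$, so no secondary corrections appear and all exponents combine additively. Note, however, that the paper does not prove this lemma at all: it is quoted verbatim from Miech \cite[Lemma~6]{mie}, so there is no internal proof to compare against. Your write-up is essentially the standard collection-process derivation one finds in Miech, reconstructed from scratch; as a self-contained justification of the quoted result it is sound.
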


\begin{lem}
Let $G$ be a metabelian thin $p$-group such that $\gamma_2(G)^p \leq \gamma_{p+1}(G)$. If  $x$ and $y$ are the generators of $G$ satisfying (\ref{quadratic-rel}), then for all $0\leq t\leq p-1$
\begin{equation}
\label{congruence of pth powers}
(x^ty)^p\equiv (x^p)^ty^p [y,x,_{p-2}y]^{\frac{-2t}{1-ht^2}}[y,x,_{p-3}y,x]^{\frac{2t^2}{1-ht^2}} \pmod{\gamma_{p+1}(G)}.
\end{equation}
\end{lem}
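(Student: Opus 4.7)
The natural approach is to apply Miech's formula (Lemma \ref{power-rel}) to the pair $(x^t,y)$ in place of $(x,y)$. Setting $\sigma_1=y$ and $\sigma_i=[\sigma_{i-1},x^t]$ for $i\geq 2$, the lemma gives
\[
(x^t y)^p = (x^t)^p\, y^p\, \sigma_2^{\binom{p}{2}}\cdots \sigma_p^{\binom{p}{p}}\, z,
\]
with $z$ as in Lemma \ref{power-rel}. Note that $(x^t)^p=(x^p)^t$ and that, for every $i\geq 2$, $\sigma_i\in\gamma_i(G)\leq\gamma_2(G)$. Since $p\mid\binom{p}{i}$ for $1\leq i\leq p-1$ and, by hypothesis, $\gamma_2(G)^p\leq\gamma_{p+1}(G)$, every intermediate factor $\sigma_i^{\binom{p}{i}}$ with $2\leq i\leq p-1$ becomes trivial modulo $\gamma_{p+1}(G)$. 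Hence
\[
(x^t y)^p \equiv (x^p)^t\, y^p\, \sigma_p\, z \pmod{\gamma_{p+1}(G)}.
\]

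Next I would compute $\sigma_p$ and $z$ modulo $\gamma_{p+1}(G)$ by exploiting the metabelian expansion $[u,x^t]=\prod_{k\geq 1}[u,_k x]^{\binom{t}{k}}$, which holds exactly in a metabelian group whenever $[u,x]\in G'$ because elements of $G'$ commute. Iterating this identity and retaining only the commutators of weight at most $p$, one sees that $\sigma_p=[y,_{p-1}(x^t)]$ reduces to $t^{p-1}[y,_{p-1}x]\equiv[y,_{p-1}x]$ (the case $t\equiv 0\pmod p$ being trivial). For $z=\prod_{i,j}[\sigma_{i+1},_j y]^{C(i,j)}$, the factor $[\sigma_{i+1},_j y]$ lies in $\gamma_{i+j+1}(G)$, so only pairs with $i+j\leq p-1$ survive; expanding each surviving factor by the same metabelian identity yields an explicit product of weight-$p$ brackets of the shape $[y,_a x,_b y]$ (with $a+b=p-1$), whose exponents are polynomials in $t$ and the integers $C(i,j)$.

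The last step is to rewrite every commutator $[y,_a x,_b y]$ with $a+b=p-1$ in the chosen basis $\{[y,x,_{p-2}y],\,[y,x,_{p-3}y,x]\}$ of $\gamma_p(G)/\gamma_{p+1}(G)$. This is done by applying the quadratic relation (\ref{quadratic-rel}) to four consecutive entries of each bracket and iterating along the tail until only the two basis commutators remain. The resulting reduction assembles a $2\times 2$ linear system whose determinant turns out to be $1-ht^2$; since $h$ is a quadratic non-residue modulo $p$, this determinant is invertible mod $p$, and inverting the system yields precisely the coefficients $-2t/(1-ht^2)$ and $2t^2/(1-ht^2)$ appearing in the statement.

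The main obstacle lies in this last step: there are many weight-$p$ commutators produced by $\sigma_p$ and the surviving factors of $z$, and one must carefully track how the $t$-dependent exponents and the binomial sums $C(i,j)$ combine as the quadratic relation is propagated from $\gamma_4(G)/\gamma_5(G)$ up to $\gamma_p(G)/\gamma_{p+1}(G)$. Once that bookkeeping is organized, the factor $1-ht^2$ arises naturally as the determinant of the $2\times 2$ transition matrix and accounts for the denominators in the final congruence.
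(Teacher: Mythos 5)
Your opening follows the paper's own route: apply Lemma \ref{power-rel} to the pair $(x^t,y)$, use $\gamma_2(G)^p\leq\gamma_{p+1}(G)$ to discard the factors $\sigma_i^{\binom{p}{i}}$ for $2\leq i\leq p-1$, and note that in $z$ only factors with $i+j\leq p-1$ can matter. But there is a genuine gap right after that: you never evaluate $C(i,j)$ modulo $p$, and you assert that every surviving factor of $z$ reduces to weight-$p$ brackets $[y,_a x,_b y]$ with $a+b=p-1$. That is false on weight grounds alone: $[\sigma_{i+1},_j\sigma_1]$ with $i+j<p-1$ has weight $i+j+1<p$ and does not vanish modulo $\gamma_{p+1}(G)$. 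These terms disappear for an arithmetic reason, namely that $C(i,j)$ is the coefficient of $u^iv^j$ in $\sum_{k=0}^{p-1}(1+u)^k(1+v)^k\equiv\bigl((u+v)+uv\bigr)^{p-1}\pmod p$, which gives $C(i,j)\equiv 0$ for $0<i+j<p-1$ and $C(i,j)\equiv(-1)^i$ for $i+j=p-1$. Without this computation you can neither kill the low-weight terms nor obtain the signs $(-1)^i$ that produce the final coefficients.

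The second gap is the mechanism you propose for the last step. There is no $2\times 2$ linear system to invert, and $1-ht^2$ is not a determinant. The reduction of $[y,_i x^t,_{p-i-1}y]$ to the pair $[y,x^t,_{p-2}y]$, $[y,x^t,_{p-3}y,x^t]$ is a direct iteration of $[y,x^t,x^t,x^t]\equiv[y,x^t,y,y]^{ht^2}\pmod{\gamma_5(G)}$ (obtained from (\ref{quadratic-rel}) by multilinearity), each application trading two entries $x^t$ for two entries $y$ at the cost of a factor $ht^2$, together with $[c,a,b]=[c,b,a]$ for $c\in G'$. Summing the resulting contributions over $i$ with the signs $(-1)^i$ yields the geometric series $\sum_{s=1}^{(p-1)/2}(ht^2)^{s-1}$, and it is precisely here that the hypothesis that $h$ is a quadratic non-residue enters: it forces $(ht^2)^{(p-1)/2}\equiv -1$, so the sum equals $2/(1-ht^2)$. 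Your write-up uses the non-residue property only to invert $1-ht^2$, which is not where it is needed, and as it stands the proposal would not produce the numerators $-2t$ and $2t^2$.
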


\begin{proof}
By Lemma \ref{power-rel}, we have
\[
(x^ty)^p=(x^p)^ty^p[y,x^t]^{\binom{p}{2}}\dots [y,_{p-1}x^t]^{\binom{p}{p}}\prod_{i=1}^{p-1}\prod_{j=1}^{p-1}[y,_i x^t,_j y]^{C(i,j)}.
\]
Since $\gamma_2(G)^p \leq \gamma_{p+1}(G)$, it then follows that
\[
(x^ty)^p \equiv (x^p)^ty^p[y,_{p-1}x^t] \prod_{\substack{1\le i,j \\ \ i+j\le p-1}}[y,_i x^t,_j y]^{C(i,j)} \pmod {\gamma_{p+1}(G)}.
\]

Note that for $i+j>0$, $C(i,j)$ is the coefficient of $u^iv^j$ in $\sum_{k=0}^{p-1}(1+u)^k(1+v)^k$, where
\[
\sum_{k=0}^{p-1}(1+u)^k(1+v)^k \equiv \bigl((u+v)+uv\bigr)^{p-1} \pmod{p}.
\]

In the previous expression the monomials of total degree less than $p$ appear only in $(u+v)^{p-1}\equiv \sum_{r=0}^{p-1}(-1)^r u^r v^{p-r-1} \pmod{p}$, and hence
\begin{equation*}
C(i,j) \equiv
\begin{cases}
0  \pmod{p}& \text{if \ $i+j<p-1$,}\\
(-1)^{i} \pmod{p} & \text{if \ $i+j=p-1$. }
\end{cases}
\end{equation*}

Thus the condition $\gamma_2(G)^p\leq \gamma_{p+1}(G)$ implies that
\[
(x^ty)^p \equiv (x^p)^ty^p\prod_{i=1}^{p-1}[y,_i x^t,_{p-i-1} y]^{(-1)^i}  \pmod {\gamma_{p+1}(G)}.
\]

On the other hand, notice that for $1\leq t \leq p-1$
\begin{equation}
\label{quad-rel for t}
[y,x^t,x^t,x^t]\equiv [y,x^t,y,y]^{ht^2}\pmod{\gamma_5(G)}.
\end{equation}
Since $G$ is metabelian, for every $a,b\in G$ and $c\in G'$ we have $[c,a,b]=[c,b,a]$, and this, together with (\ref{quad-rel for t}), yields that
\begin{equation*}
	[y,_i x^t,_{p-i-1} y]^{(-1)^i} \equiv
	\begin{cases}
	[y,x^t,_{p-2}y]^{-(ht^2)^{s-1}} \pmod{\gamma_{p+1}(G)}& \hspace{-1em}\text{ if \ $i=2s-1$ },\\
	[y,x^t,_{p-3}y,x^t]^{(ht^2)^{s-1}} \pmod{\gamma_{p+1}(G)}& \hspace{-1em}\text{ if \ $i=2s$ },
	\end{cases}
\end{equation*}
and hence
\begin{equation*}
(x^ty)^p \equiv (x^p)^ty^p \left( [y,x,_{p-2}y]^{-t}[y,x,_{p-3}y,x]^{t^2} \right)^
{\sum_{s=1}^{(p-1)/2}(ht^2)^{s-1}} \pmod{\gamma_{p+1}(G)}.
\end{equation*}

Note that since $h$ is a quadratic non-residue, we have $\sum_{s=1}^{(p-1)/2}(ht^2)^{s-1}=\frac{2}{1-ht^2}$. Consequently, we get
\begin{equation*}
(x^ty)^p \equiv (x^p)^ty^p [y,x,_{p-2}y]^{\frac{-2t}{1-ht^2}}[y,x,_{p-3}y,x]^{\frac{2t^2}{1-ht^2}} \pmod{\gamma_{p+1}(G)},
\end{equation*}
for $0\leq t\leq p-1 $, as desired.
\end{proof}

\begin{lem}
\label{collision}
Let $G$ be a metabelian thin $p$-group such that $|\gamma_p(G)|\geq p^2$ and let $x$ and $y$ be the generators of $G$ satisfying (\ref{quadratic-rel}). Then for every $t_0\in \{0,1\dots,p-1\}$ there exist at most three distinct $t\in \{0,1\dots,p-1\}$ such that
\begin{equation}
\label{coincidence}
\langle (x^{t_0}y)^p\rangle \equiv \langle (x^ty)^p\rangle \pmod{\gamma_{p+1}(G)}.
\end{equation}
\end{lem}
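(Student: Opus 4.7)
The plan is to translate the equality of cyclic subgroups into a polynomial equation of degree at most three in $t$ over $\F_p$. Write $V=\gamma_p(G)/\gamma_{p+1}(G)$, an $\F_p$-vector space of dimension at most $2$ since $G$ is thin, and let $a$, $b$, $c$, $d\in V$ be the images of $x^p$, $y^p$, $[y,x,_{p-2}y]$ and $[y,x,_{p-3}y,x]$ respectively. If $\phi(t)\in V$ denotes the image of $(x^ty)^p$, then (\ref{congruence of pth powers}) reads
\[
\phi(t)=ta+b-\frac{2t}{1-ht^{2}}\,c+\frac{2t^{2}}{1-ht^{2}}\,d,
\]
and since $h$ is a quadratic non-residue, $1-ht^{2}$ is a unit in $\F_p$ for every $t$, so the rescaled map
\[
\widetilde\phi(t)=(1-ht^{2})\phi(t)=-ha\,t^{3}+(2d-hb)\,t^{2}+(a-2c)\,t+b
\]
is a polynomial function of $t$ valued in $V$, of degree at most $3$; moreover $\langle\phi(t_0)\rangle=\langle\phi(t)\rangle$ in $V$ is equivalent to the $\F_p$-linear dependence of $\widetilde\phi(t_0)$ and $\widetilde\phi(t)$.

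Next I would pick a basis of $V$ adapted to the problem. Set $z=[y,x,_{p-3}y]\in\gamma_{p-1}(G)$. In the main case $z\notin\gamma_p(G)$, the covering property (\ref{covering property}) yields $\gamma_p(G)=[z,G]\gamma_{p+1}(G)$, so $\{c,d\}=\{[z,y],[z,x]\}$ generates $V$, and assuming $\dim_{\F_p}V=2$ it is a basis. Writing $a=a_1c+a_2d$ and $b=b_1c+b_2d$, the coordinates of $\widetilde\phi(t)$ are
\begin{align*}
P(t) &= -ha_1 t^{3}-hb_1 t^{2}+(a_1-2)\,t+b_1,\\
Q(t) &= -ha_2 t^{3}+(2-hb_2)\,t^{2}+a_2\,t+b_2,
\end{align*}
and the linear-dependence condition becomes the vanishing of $R(t)=P(t)Q(t_0)-P(t_0)Q(t)\in\F_p[t]$, a polynomial of degree at most~$3$. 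Since $R(t_0)=0$ always, it suffices to prove $R\not\equiv 0$: the bound of three roots would then follow from the fundamental theorem of algebra.

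The key step is to show $R\not\equiv 0$. The $t$-coefficient of $P$ is $a_1-2$ while its $t^{3}$-coefficient is $-ha_1$, so $P\not\equiv 0$; similarly the $t^{2}$-coefficient of $Q$ is $2-hb_2$ while its constant term is $b_2$, so $Q\not\equiv 0$. If $R\equiv 0$ then $P\equiv\mu Q$ for some $\mu\in\F_p^{\times}$; the $t^{3}$-coefficients give $a_1=\mu a_2$ and the $t$-coefficients give $a_1-2=\mu a_2$, whence $2\equiv 0\pmod p$, contradicting $p\geq 5$. The main technical obstacle is then the degenerate subcase $z\in\gamma_p(G)$, in which $c=d=0$ in $V$ and $\phi(t)=ta+b$ becomes affine; here one must argue separately that $a,b$ are $\F_p$-linearly independent in $V$ (invoking $G^p=\gamma_p(G)$, Lemma~\ref{place of G^p} and the hypothesis $|\gamma_p(G)|\geq p^2$), so that $\phi(t)$ and $\phi(t_0)$ are proportional only when $t=t_0$.
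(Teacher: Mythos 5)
Your argument is essentially the paper's own proof in only slightly different clothing: the same basis $\{c,d\}$ of $\gamma_p(G)/\gamma_{p+1}(G)$ obtained from the covering property, and the same two coordinate functions (your $P$ and $Q$ are exactly the paper's $f$ and $g$ multiplied by $1-ht^2$), reduced to a polynomial of degree at most $3$. Your explicit check that $P\not\equiv 0$, $Q\not\equiv 0$, and that $P$ and $Q$ cannot be proportional (comparing the $t^3$- and $t$-coefficients to force $2\equiv 0$) is in fact more complete than what the paper writes down, and your separate treatment of the degenerate situation $z\in\gamma_p(G)$, where $\phi$ becomes affine, is correct as sketched.

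There is, however, a genuine gap in the step ``if $R\equiv 0$ then $P\equiv\mu Q$ for some $\mu\in\F_p^{\times}$.'' This implication fails exactly when $P(t_0)=Q(t_0)=0$, i.e.\ when $(x^{t_0}y)^p\in\gamma_{p+1}(G)$: then $R=Q(t_0)P-P(t_0)Q$ is identically zero regardless of $P$ and $Q$, and the bound ``at most three roots of $R$'' yields nothing. The case is not vacuous --- these groups may have maximal subgroups of exponent $p$ (the proof of Theorem \ref{class p} explicitly allows up to three of them), and for the corresponding $t_0$ one has $\widetilde\phi(t_0)=0$. The same case invalidates your earlier claim that equality of the cyclic subgroups is \emph{equivalent} to linear dependence of $\widetilde\phi(t_0)$ and $\widetilde\phi(t)$: when $\widetilde\phi(t_0)=0$ every pair is linearly dependent, whereas the subgroup equality forces $\widetilde\phi(t)=0$. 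The repair is one line --- in that case (\ref{coincidence}) requires $P(t)=Q(t)=0$, and $P\not\equiv 0$ of degree at most $3$ already gives at most three such $t$ --- and it is precisely the branch the paper isolates with its ``if $f(t_0)=0$ or $g(t_0)=0$'' case distinction. As written, though, your proof does not cover it.
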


\begin{proof}
Since $|\gamma_p(G)|\geq p^2$, we have $G^p=\gamma_p(G)$ and  $\gamma_2(G)^p \leq \gamma_{p+1}(G)$, by Lemma \ref{place of G^p}. Also note that $|\gamma_p(G): \gamma_{p+1}(G)|=p^2$. Notice that, as a consequence of (\ref{covering property}), $l=[y,x,_{p-2}y]$ and $m=[y,x,_{p-3}y,x]$ are linearly independent modulo $\gamma_{p+1}(G)$. Thus $(l,m)$ is a basis of $\gamma_p(G)$ modulo $\gamma_{p+1}(G)$. If we set 
$x^p \equiv l^{\alpha}m^{\beta} \pmod{\gamma_{p+1}(G)}$ and 
$y^p \equiv l^{\gamma} m^{\delta} \pmod{\gamma_{p+1}(G)}$ for some $\alpha, \beta, \gamma, \delta \in \F_p$, then by (\ref{congruence of pth powers}), we have
\begin{equation}
\label{congruence wrto basis}
(x^ty)^p \equiv l^{\gamma+ \alpha t- \frac{2t}{1-ht^2}} \ \ m^{\delta+\beta t+ \frac{2t^2}{1-ht^2}} 
\pmod{\gamma_{p+1}(G)}.
\end{equation}
 Observe that as rational functions in $t$, neither $f(t)=\gamma+ \alpha t- \frac{2t}{1-ht^2}$ nor  
$g(t)=\delta+\beta t+ \frac{2t^2}{1-ht^2}$ are zero.

We now fix $t_0 \in \{0,1\dots,p-1\}$. Then (\ref{coincidence}) holds if and only if there exists $\lambda \in \F_p^{*}$ such that
\begin{gather*}
f(t)= \lambda f(t_0)
\quad
\text{and}
\quad
g(t)= \lambda g(t_0).
\end{gather*}
If $f(t_0)=0$ or $g(t_0)=0$, then we have  $f(t)=0$ or $g(t)=0$, that is $(1-ht^2)(\gamma+\alpha t)-2t=0$ or $(1-ht^2)(\delta+ \beta t)+2t^2=0$. Otherwise, we have $\frac{f(t)}{f(t_0)}=\frac{g(t)}{g(t_0)}$. Then $g(t_0)f(t)-f(t_0)g(t)=0$, that is 
\[
g(t_0)\big((1-ht^2)(\gamma+\alpha t)-2t \big)-f(t_0) \big((1-ht^2)(\delta+ \beta t)+2t^2\big)=0,
\]
which is a polynomial in $t$ of degree $\leq 3$. Thus in every case, there are at most three distinct  $t\in \{0,1\dots,p-1\}$ such that $\langle (x^{t_0}y)^p\rangle\equiv \langle (x^ty)^p \rangle \pmod{\gamma_{p+1}(G)}$.
\end{proof}

\begin{lem}
\label{companion}
Let $G$ be a metabelian thin $p$-group such that $\gamma_2(G)^p \leq \gamma_{p+1}(G)$. If $M$ is a maximal subgroup of $G$ and $a,b \in M\smallsetminus G'$, then $\langle a \rangle^p \equiv \langle b\rangle^p \pmod{\gamma_{p+1}(G)}$.
\end{lem}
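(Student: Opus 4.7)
My plan is to reduce the statement to a single application of Meier's formula. First, since $M/G'$ is cyclic of order $p$ and both $a$ and $b$ map to non-trivial cosets of $G'$, I can pick an integer $j$ with $\gcd(j,p)=1$ and an element $c\in G'$ so that $b=a^jc$. My target is the congruence
\[
b^p\equiv a^{jp}\pmod{\gamma_{p+1}(G)};
\]
once this is in hand, $\langle b\rangle^p\equiv \langle a^{jp}\rangle=\langle a\rangle^p \pmod{\gamma_{p+1}(G)}$ follows, because $\gcd(j,p)=1$ and $G/\gamma_{p+1}(G)$ is a $p$-group, so $(a^p)^j$ generates the same cyclic subgroup of $G/\gamma_{p+1}(G)$ as $a^p$.

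To establish the congruence I would apply Lemma \ref{power-rel} to the product $a^j\cdot c$, letting $a^j$ and $c$ play the roles of $x$ and $y$ respectively, so that $\sigma_1=c$ and $\sigma_i=[\sigma_{i-1},a^j]$ for $i\geq 2$. The key observation is that since $\sigma_1=c\in G'=\gamma_2(G)$, a straightforward induction gives $\sigma_i\in\gamma_{i+1}(G)$ for every $i\geq 1$; in particular every $\sigma_i$ lies in the abelian subgroup $G'$. I then inspect each factor of the resulting expansion and verify that it vanishes modulo $\gamma_{p+1}(G)$. The factor $c^p$ belongs to $(G')^p=\gamma_2(G)^p\leq\gamma_{p+1}(G)$ by hypothesis. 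For $2\leq i\leq p-1$ the binomial $\binom{p}{i}$ is divisible by $p$, so $\sigma_i^{\binom{p}{i}}$ is a $p$th power of an element of $G'$ and is again absorbed into $\gamma_{p+1}(G)$. The term $\sigma_p^{\binom{p}{p}}=\sigma_p$ is already in $\gamma_{p+1}(G)$ by the inductive observation. Finally, the correction factor $z$ vanishes outright, because each commutator appearing in it pairs two elements of $G'$ and $G$ is metabelian. Putting everything together yields $b^p\equiv (a^j)^p=a^{jp}\pmod{\gamma_{p+1}(G)}$, and the first paragraph closes the argument.

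The only step requiring any care is the bookkeeping inside Meier's expansion: one must confirm that $\gamma_2(G)^p\leq\gamma_{p+1}(G)$ is sharp enough to swallow $c^p$ together with every $\sigma_i^{\binom{p}{i}}$ for $2\leq i\leq p-1$, and that $\sigma_p$ itself already lies in $\gamma_{p+1}(G)$. Since this hypothesis was put in place precisely for such a computation, and the metabelian condition kills the $z$-term for free, I do not foresee a genuine obstacle.
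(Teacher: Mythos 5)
Your proof is correct and follows essentially the same route as the paper: write $b=a^jc$ with $c\in G'$ and $p\nmid j$, expand the $p$th power, and absorb every correction term into $\gamma_{p+1}(G)$ using $\gamma_2(G)^p\leq\gamma_{p+1}(G)$ together with $\gamma_j(\langle a,c\rangle)\leq\gamma_{j+1}(G)$. The only (cosmetic) difference is that the paper invokes the Hall--Petrescu collection formula \cite[III.9.4]{hup} where you use Lemma \ref{power-rel} (which is Miech's formula, not ``Meier's''); the bookkeeping is identical, and your observation that the $z$-term dies because both entries of each commutator lie in the abelian subgroup $G'$ is exactly right.
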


\begin{proof}
If we write $b= a^ic$ for some $c\in G'$ and for some integer $i$ not divisible by $p$, then by the Hall-Petrescu collection formula (see \cite[III.9.4]{hup}), we have 
\[
(a^ic)^p= a^{pi}c^p c_2^{\binom{p}{2}}c_3^{\binom{p}{3}}\dots c_p,
\]
where $c_j \in \gamma_j(\langle a,c \rangle)\leq \gamma_{j+1}(G)$. Thus $(a^ic)^p \equiv a^{pi} \pmod{\gamma_{p+1}(G)}$, and hence $\langle a \rangle^p \equiv \langle b\rangle^p \pmod{\gamma_{p+1}(G)}$.
\end{proof}

\begin{rmk}
	If we replace $x$ with $x^{*}$, where $x^{*}\in G\smallsetminus G'$ is not a power of $x$, there exists a corresponding $y^{*}$ satisfying (\ref{quadratic-rel}). Then $x\in \langle (x^{*})^{t_0}y^{*}, G'\rangle\smallsetminus G'$ for some $0\leq t_0\leq p-1$, and according to Lemma \ref{companion}, we have $\langle x^p \rangle \equiv \langle((x^{*})^{t_0}y^{*})^p \rangle\pmod{\gamma_{p+1}(G)}$. It then follows from Lemma \ref{collision} that there exist at most three distinct $t\in \{0,1\dots,p-1\}$ such that $\langle x^p\rangle\equiv \langle (x^ty)^p \rangle \pmod{\gamma_{p+1}(G)}$.
\end{rmk}

The following corollary is an immediate consequence of Lemmas \ref{collision} and \ref{companion}.

\begin{cor}
\label{number of coincidence}
Let $G$ be a metabelian thin $p$-group such that $|\gamma_{p}(G)|\ge p^2$. If  $M$ is a maximal subgroup of $G$, then there exist at most two maximal subgroups $M_1$, $M_2$ different from $M$  such that $M^p\equiv M_1^p \equiv M_2^p \pmod{\gamma_{p+1}(G)}$.
\end{cor}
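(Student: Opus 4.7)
Fix a maximal subgroup $M$ of $G$ and let $\Phi(M)$ denote the set of maximal subgroups $M'$ satisfying $(M')^p\equiv M^p\pmod{\gamma_{p+1}(G)}$; we must show $|\Phi(M)|\le 3$. The strategy is to locate an auxiliary maximal subgroup $M^{*}\notin\Phi(M)$, then change generators so that $M^{*}$ plays the role of the ``distinguished'' subgroup $\langle x,G'\rangle$; Lemma \ref{collision} will then bound $\Phi(M)$ within the remaining $p$ maximal subgroups, and $M^{*}$ will be automatically excluded.

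\emph{Producing $M^{*}\notin\Phi(M)$.} By Lemma \ref{place of G^p} and (\ref{wunderli}), the hypothesis $|\gamma_p(G)|\ge p^2$ yields both $G^p=\gamma_p(G)$ and $|\gamma_p(G):\gamma_{p+1}(G)|=p^2$, so $G^p/\gamma_{p+1}(G)\cong\F_p^{2}$. This $2$-dimensional quotient is generated by the $p+1$ cyclic classes $M^p\gamma_{p+1}(G)/\gamma_{p+1}(G)$ as $M$ ranges over the maximal subgroups (each such class being either trivial or a single line); if all of these classes coincided, they would span at most a $1$-dimensional subspace, contradicting the $2$-dimensionality. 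Hence some maximal subgroup $M^{*}$ has $(M^{*})^p\not\equiv M^p\pmod{\gamma_{p+1}(G)}$.

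\emph{Applying Lemmas \ref{companion} and \ref{collision}.} Pick $x\in M^{*}\smallsetminus G'$ and use Theorem B of \cite{BCS} to obtain a companion $y$ with $G=\langle x,y\rangle$ satisfying (\ref{quadratic-rel}). Then $M^{*}=\langle x,G'\rangle$ and the remaining $p$ maximal subgroups are $M_t=\langle x^ty,G'\rangle$ for $t=0,\dots,p-1$; write $M=M_{t_0}$ for the unique $t_0\in\{0,\dots,p-1\}$. By Lemma \ref{companion}, the condition $M^p\equiv M_t^p\pmod{\gamma_{p+1}(G)}$ translates into $\langle(x^{t_0}y)^p\rangle\equiv\langle(x^ty)^p\rangle\pmod{\gamma_{p+1}(G)}$, and Lemma \ref{collision} bounds the number of such $t$ (including $t=t_0$) by $3$. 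Since $M^{*}\notin\Phi(M)$ by construction, the whole fiber $\Phi(M)$ lies in $\{M_0,\dots,M_{p-1}\}$, giving $|\Phi(M)|\le 3$; that is, at most two maximal subgroups $M_1,M_2$ different from $M$ satisfy $M^p\equiv M_1^p\equiv M_2^p\pmod{\gamma_{p+1}(G)}$.

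\textbf{Main obstacle.} The crux is producing the auxiliary subgroup $M^{*}$ outside $\Phi(M)$; once that is in hand, the change of generators makes $M^{*}$ the distinguished subgroup, and Lemma \ref{collision} closes the argument directly. This existence relies in an essential way on $\gamma_p(G)/\gamma_{p+1}(G)$ being $2$-dimensional, which forces the $p$-th power classes of the $p+1$ maximal subgroups to span a plane rather than a line and hence prevents all of them from coinciding.
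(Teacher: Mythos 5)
Your argument is correct, and its core is the same as the paper's: reduce the comparison of $M^p$ with the other power subgroups to the comparison of $\langle (x^{t_0}y)^p\rangle$ with $\langle (x^ty)^p\rangle$ via Lemma \ref{companion}, and then invoke the bound of Lemma \ref{collision}. The one place where you go beyond what the paper writes is the explicit construction of the auxiliary subgroup $M^{*}$ with $(M^{*})^p\not\equiv M^p \pmod{\gamma_{p+1}(G)}$, obtained from the fact that $\gamma_p(G)/\gamma_{p+1}(G)$ is $2$-dimensional while each $M^p$ is cyclic modulo $\gamma_{p+1}(G)$; this step is genuinely needed, since Lemma \ref{collision} only controls the $p$ subgroups $\langle x^ty,G'\rangle$ and says nothing about the distinguished subgroup $\langle x,G'\rangle$, so without knowing that $\langle x,G'\rangle$ lies outside the coincidence class one only gets a bound of three, not two, subgroups different from $M$. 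The paper declares the corollary an ``immediate consequence'' of the two lemmas and only gestures at this issue in the preceding remark, so your filling of that gap is a worthwhile sharpening rather than a detour.
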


Before we present the main result, we also need the following remark.

\begin{rmk}
	\label{modifying generators}
	Let $G$ be a finite $2$-generator $p$-group. Then we can always find elements $x,y \in G\smallsetminus \Phi(G)$ such that $x,y$ and $xy$ fall into the given three maximal subgroups of $G$. Let $M_1$, $M_2$ and $M_3$ be three maximal subgroups of $G$. Choose $x\in M_1 \smallsetminus \Phi(G)$ and $y \in M_2 \smallsetminus \Phi(G)$.  Since each element in the set $\{xy^j \mid 1\leq j \leq p-1 \}$ falls into different maximal subgroups, there exists $1\leq j \leq p-1$ such that $xy^j \in M_3\smallsetminus \Phi(G)$. Thus if we put $x*=x$ and $y*=y^j$, then elements in the triple $\{x^{*},y^{*}, x^{*}y^{*}\}$ fall into the given three maximal subgroups.
\end{rmk}

We are now ready to prove Theorem A. We deal separately with the cases in the theorem.

\begin{thm}
\label{class p}
Let $G$ be a metabelian thin $p$-group with $\cl(G)=p$ such that $|\gamma_p(G)|=p^2$, where $p\geq 5$. Then $G$ has a Beauville structure in which one of the two triples has all elements of order $p^2$.
\end{thm}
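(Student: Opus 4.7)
The plan is to exhibit the Beauville structure by selecting six pairwise distinct maximal subgroups $M_1,M_2,M_3,N_1,N_2,N_3$ of $G$ with appropriate $p$-th powers and then applying Remark \ref{modifying generators} to realize each triple by a generating pair.

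\textbf{Setup and reduction.} From $\cl(G)=p$ and $|\gamma_p(G)|=p^2$, inequality (\ref{wunderli}) gives $G^p=\gamma_p(G)$, Lemma \ref{place of G^p} gives $\gamma_2(G)^p\leq\gamma_{p+1}(G)=1$ (so $\exp G'=p$), and $\gamma_p(G)\leq Z(G)$. For any $a\in M\smallsetminus G'$ with $M$ a maximal subgroup, Lemma \ref{companion} (with $\gamma_{p+1}(G)=1$) yields $\langle a\rangle^p=M^p\leq\gamma_p(G)$; thus $a$ has order $p^2$ iff $M^p\neq 1$, and by the centrality of $M^p$, every conjugate $\langle a\rangle^g$ has unique order-$p$ subgroup equal to $M^p$. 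Because two nontrivial cyclic $p$-subgroups intersect trivially iff their minimal subgroups differ, the condition $\Sigma(x_1,y_1)\cap\Sigma(x_2,y_2)=1$ reduces, with $M_1,M_2,M_3$ and $N_1,N_2,N_3$ denoting the maximal subgroups containing the elements of the two triples, to: $M_i^p\neq N_j^p$ whenever both are nontrivial. (The case $N_j^p=1$ is automatic, since then any $b\in N_j\smallsetminus G'$ has order $p$ and $\langle b\rangle^g\not\leq G'\supseteq M_i^p$.) It therefore suffices to find six pairwise distinct maximal subgroups with $M_i^p\neq 1$ for all $i$ and with each $N_j$ either trivial or satisfying $N_j^p\notin\{M_1^p,M_2^p,M_3^p\}$.

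\textbf{Combinatorial selection.} By Corollary \ref{number of coincidence}, the relation $M\sim N\iff M^p=N^p$ has classes of size at most $3$, so at most $3$ maximal subgroups are trivial (i.e., satisfy $M^p=1$); hence at least $p-2\geq 3$ are nontrivial, and $G^p=\gamma_p(G)$ of dimension $2$ forces at least two distinct nontrivial values of $M^p$. I choose $M_1,M_2,M_3$ (all nontrivial) so as to minimize $|\mathcal{B}|$, where $\mathcal{B}=\{M:M^p\in\{M_1^p,M_2^p,M_3^p\}\}$: then the $(p+1)-|\mathcal{B}|$ maximal subgroups outside $\mathcal{B}$ are automatically admissible as $N_j$'s. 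If some nontrivial $\sim$-class has size $3$, take it as the first triple: then $|\mathcal{B}|=3$ and $p-2\geq 3$ maximal subgroups remain available. Otherwise all nontrivial classes have size $\leq 2$, and a direct case analysis -- guided by the explicit rational map $t\mapsto[F(t):G(t)]$ from the proof of Lemma \ref{collision}, which rules out the awkward class-size distributions (such as $(2,2,2)$ over $\F_5$) that would otherwise spoil the count -- still yields $|\mathcal{B}|\leq p-2$; when the trivial class has size $3$, one may alternatively take its three elements directly as $N_1,N_2,N_3$.

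\textbf{Conclusion.} Having fixed the six maximal subgroups, Remark \ref{modifying generators} produces generating pairs $\{x_1,y_1\}$ and $\{x_2,y_2\}$ whose triples $\{x_1,y_1,x_1y_1\}$ and $\{x_2,y_2,x_2y_2\}$ realize the prescribed maximal subgroups; all elements of the first triple then have order $p^2$, and by the reduction above we obtain a Beauville structure. The main technical obstacle is the case analysis when no nontrivial equivalence class has size $3$, which requires the explicit form of $M^p$ given by the proof of Lemma \ref{collision}.
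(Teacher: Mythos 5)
Your reduction is sound and, for $p\ge 7$, your argument is complete and in fact a cleaner reorganization of what the paper does: the paper also classifies maximal subgroups by their $p$th powers, invokes Corollary \ref{number of coincidence} to bound the coincidence classes by $3$, and splits into cases according to whether some class of exponent-$p^2$ subgroups has size $3$, size $2$, or all classes are singletons; your observation that everything reduces to comparing the central subgroups $M^p\le\gamma_p(G)=Z(G)\cap\gamma_p(G)$ via unique minimal subgroups of cyclic $p$-groups is exactly the content of the paper's final verification paragraph.

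The genuine gap is at $p=5$. You assert that ``a direct case analysis guided by the explicit rational map $t\mapsto[F(t):G(t)]$ from the proof of Lemma \ref{collision}'' rules out the awkward fiber distributions, but you do not carry this out, and it is not a formal consequence of the shape of $F$ and $G$ alone: the polynomials $F(t)=(1-ht^2)(\gamma+\alpha t)-2t$ and $G(t)=(1-ht^2)(\delta+\beta t)+2t^2$ depend on parameters $\alpha,\beta,\gamma,\delta$ that are constrained only by which metabelian thin $5$-groups actually exist. The paper does not dispose of this case analytically either; it appeals to the explicit construction of metabelian thin $p$-groups in \cite{BCS} together with a GAP computation to show that no metabelian thin $5$-group of class $5$ with $|\gamma_5(G)|=5^2$ has its maximal subgroups' fifth powers coinciding in pairs, which is precisely what guarantees the existence of a singleton nontrivial class and hence $|\mathcal{B}|=3$. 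Note also that $(2,2,2)$ is not the only obstruction for $p=5$: the distribution with two nontrivial classes of size $2$ plus two maximal subgroups of exponent $5$ likewise forces $|\mathcal{B}|=4>p-2$, and your size-$3$-trivial-class escape does not apply to it; any honest treatment of $p=5$ must exclude this configuration as well. As written, the $p=5$ case of your proof rests on an unproved claim.
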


\begin{proof}
We divide our proof into three cases depending on the number of maximal subgroups whose $p$th powers coincide, and in every case, we take into account Corollary \ref{number of coincidence} and Remark \ref{modifying generators}. First of all, note that since $G$ has at most three maximal subgroups of exponent $p$ and $p \geq 5$, there are at least three maximal subgroups of exponent $p^2$. 

\textbf{Case $1$:} Assume that there is a 1-1 correspondence between maximal subgroups $M_i$ of exponent 
$p^2$ and $M_i^p$. Choose a set of generators $\{x_1,y_1\}$ such that $o(x_1)=o(y_1)=o(x_1y_1)=p^2$.

\textbf{Case $2$:} Assume that there exist three maximal subgroups of exponent $p^2$ such that their $p$th power subgroups coincide. Then choose a set of generators $\{x_1,y_1\}$ such that $x_1, y_1$ and $x_1y_1$ fall into those maximal subgroups.

In both Case 1 and 2, since $p\geq5$, we can choose another set of generators $\{x_2,y_2\}$ so that each pair of elements in $\{x_i,y_i, x_iy_i \mid i=1,2\}$ is linearly independent modulo $G'$ by Remark \ref{modifying generators}.

\textbf{Case $3$:} Assume that we are not in the first two cases. Then there exist two maximal subgroups $M_1$, $M_2$ of exponent $p^2$ such that $M_1^p=M_2^p$ and $M^p\neq M_1^p$ for all other maximal subgroups $M$.

Let us first deal with $p\geq 7$. We start by choosing a set of generators $\{x_1, y_1\}$ where 
$x_1 \in M_1$ and $y_1 \in M_2$ are such that $o(x_1y_1)=p^2$, say $x_1y_1 \in M_3$. Then there might be a maximal subgroup $M_4$ such that $M_3^p=M_4^p$ (note that there is no other 
$i\neq 3,4$ satisfying $M_i^p=M_3^p$, otherwise we are in Case $2$). Since $p\geq 7$, we can choose another set of generators $\{x_2,y_2\}$ so that $x_2,y_2, x_2y_2 \notin M_4$ and each pair of elements in 
$\{x_i,y_i, x_iy_i \mid i=1,2\}$ is linearly independent modulo $G'$, by Remark \ref{modifying generators}.

If $p=5$ then by using the construction of metabelian thin $p$-groups in \cite{BCS} and the computer algebra system GAP, one can show that there is no metabelian thin $5$-group of class $5$ such that $|\gamma_5(G)|=5^2$ and $5$th powers of maximal subgroups coincide in pairs. Thus in Case $3$, there exists a maximal subgroup, say $M_3$, of exponent $5^2$, where all other $M^5$ are different from $M_3^5$. Then choose sets of generators $\{x_1, y_1\}$ and $\{x_2,y_2\}$ so that $x_1 \in M_1$ , $y_1 \in M_2$ and $x_1y_1 \in M_3$ and each pair of elements in $\{x_i,y_i, x_iy_i \mid i=1,2\}$ is linearly independent modulo $G'$.

We claim that, in every case, $\{x_1,y_1\}$ and $\{x_2,y_2\}$ form a Beauville structure for $G$. If $A=\{x_1,y_1,x_1y_1\}$ and $B=\{x_2,y_2,x_2y_2\}$, then we need to show that
\begin{equation}
\label{check for BS}
\langle a^g\rangle\cap \langle b^h\rangle=1,
\end{equation}
for all $a\in A$, $b\in B$ and $g,h\in G$. Note that $o(a)=p^2$ for every $a\in A$. Assume first that $o(b)=p$. If $\langle a^g\rangle\cap \langle b^h\rangle\neq 1$ for some $g,h\in G$, then $\langle b^h\rangle\subseteq\langle a^g\rangle$, and hence $\langle aG'\rangle=\langle bG'\rangle$, which is a contradiction, since $a$ and $b$ are linearly independent modulo $G'$. Thus we assume that $o(b)=p^2$. If (\ref{check for BS}) does not hold, then $\langle (a^g)^p \rangle=\langle (b^h)^p\rangle$, which contradicts the choice of $b$.
\end{proof}

In order to deal with  the case $\cl(G)=p+1$, we need the following lemma.

\begin{lem}\textup{\cite[Lemma~4.2]{FJ} }
	\label{lifting structure}
	Let $G$ be a finite group and let $\{x_1,y_1\}$ and $\{x_2,y_2\}$ be two sets of generators of $G$.
	Assume that, for a given $N\trianglelefteq G$, the following hold:
	\begin{enumerate}
		\item$\{x_1N,y_1N\}$ and $\{x_2N,y_2N\}$ is a Beauville structure for $G/N$,
		\item $o(u)=o(uN)$ for every $u\in\{x_1,y_1,x_1y_1\}$.
	\end{enumerate}
	Then $\{x_1,y_1\}$ and $\{x_2,y_2\}$ is a Beauville structure for $G$.
\end{lem}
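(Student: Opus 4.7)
The approach I would take is to verify the Beauville condition directly, namely that $\Sigma(x_1,y_1)\cap\Sigma(x_2,y_2)=1$ in $G$; the $2$-generation of $G$ is already immediate from the existence of the generating set $\{x_1,y_1\}$. The whole argument splits naturally into a reduction modulo $N$ followed by an order-lifting step.

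Concretely, I would start by picking an arbitrary $z$ in the intersection and extracting, from the definition of $\Sigma$, simultaneous expressions $z=(u^g)^a$ with $u\in\{x_1,y_1,x_1y_1\}$ and $z=(v^h)^b$ with $v\in\{x_2,y_2,x_2y_2\}$. Next, I would push everything to $G/N$: since $\Sigma$ is built out of conjugates and cyclic subgroups, which descend cleanly to quotients, $zN$ lies in $\Sigma(x_1N,y_1N)\cap\Sigma(x_2N,y_2N)$, and hypothesis (i) forces this to be trivial. Hence $z\in N$. Finally, rewriting $z=(u^a)^g$ and using the normality of $N$, I would deduce $u^a\in N$, i.e.\ $(uN)^a=N$ in $G/N$, so that $o(uN)\mid a$; hypothesis (ii) then upgrades this to $o(u)\mid a$, giving $u^a=1$ and thus $z=1$.

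There is no real obstacle here --- the argument is essentially a quotient-lifting formality. The one conceptual point worth flagging is the asymmetry of the hypotheses: the order condition (ii) is demanded only on the triple $\{x_1,y_1,x_1y_1\}$ and not on $\{x_2,y_2,x_2y_2\}$. That asymmetry is exactly what the argument needs: after pushing $z$ into $N$, one only has to collapse a single cyclic-subgroup expression for $z$ down to the identity, and (ii) provides this on the $\{x_1,y_1\}$-side; the other side is used solely to witness membership in $\Sigma(x_2,y_2)$, which is a pure quotient matter that does not care about orders of preimages.
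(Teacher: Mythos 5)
Your argument is correct and complete: reducing an element of $\Sigma(x_1,y_1)\cap\Sigma(x_2,y_2)$ modulo $N$ to land in $N$, then using normality to get $u^a\in N$ and hypothesis (ii) to force $u^a=1$, is exactly the standard proof, and your remark about why (ii) is only needed on one triple is accurate. Note that the paper itself gives no proof here --- it simply cites Lemma~4.2 of Fuertes and Jones --- and your argument coincides with the one in that reference.
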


\begin{thm}
Let $G$ be a metabelian thin $p$-group with $\cl(G)=p+1$, where $p\geq 5$. Then $G$ has a Beauville structure.
\end{thm}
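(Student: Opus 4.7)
The plan is to reduce the problem to Theorem~\ref{class p} via Lemma~\ref{lifting structure}, applied with $N = \gamma_{p+1}(G)$. Since $\cl(G) = p+1$, the subgroup $N$ is central (because $\gamma_{p+2}(G) = 1$) and cyclic of order $p$, so the quotient $\bar G = G/N$ is a metabelian thin $p$-group of class $p$ with $|\gamma_p(\bar G)| = |\gamma_p(G)|/p = p^2$. By Theorem~\ref{class p}, $\bar G$ admits a Beauville structure $\{\bar x_1, \bar y_1\}$, $\{\bar x_2, \bar y_2\}$ in which the first triple $\{\bar x_1, \bar y_1, \bar x_1 \bar y_1\}$ consists of elements of order $p^2$. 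Any lifts $x_i, y_i \in G$ generate $G$ by Burnside's basis theorem, so by Lemma~\ref{lifting structure} it suffices to verify $o(u) = o(\bar u) = p^2$ for every $u$ in the first triple.

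Since $\bar u^p \neq 1$ we have $u^p \notin N$, whence $o(u) \geq p^2$. The heart of the matter is to show $u^{p^2} = 1$, which I would deduce from the stronger global identity $\gamma_p(G)^p = 1$. Because $G$ is metabelian, $\gamma_p(G) = [\gamma_{p-1}(G), G]$ is generated as a subgroup by the commutators $[a, b]$ with $a \in \gamma_{p-1}(G) \leq G'$ and $b \in G$ (no closure under $G$-conjugation is needed, thanks to the normality of $\gamma_{p-1}(G)$), and $\gamma_p(G)$ is abelian. The metabelian identity $[a, b]^p = [a^p, b]$ for $a \in G'$, which holds because $a$ and $[a, b]$ both lie in the abelian subgroup $G'$ and hence commute, combined with the inclusion $\gamma_2(G)^p \leq \gamma_{p+1}(G)$ from Lemma~\ref{place of G^p}, gives $a^p \in \gamma_{p+1}(G)$ and hence $[a, b]^p = [a^p, b] \in [\gamma_{p+1}(G), G] = \gamma_{p+2}(G) = 1$. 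Since $\gamma_p(G)$ is abelian, the vanishing on generators extends to all of $\gamma_p(G)$.

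It follows that $u^{p^2} = (u^p)^p \in \gamma_p(G)^p = 1$ for every $u \in G$, so $o(u) = p^2 = o(\bar u)$ for every $u$ in the first triple, and Lemma~\ref{lifting structure} lifts the Beauville structure from $\bar G$ to $G$. The only delicate point I anticipate is establishing $\gamma_p(G)^p = 1$: once one notes that $\gamma_p(G)$ is abelian and generated as a plain subgroup by the commutators $[a, b]$ above, everything follows from the metabelian commutator identity, Lemma~\ref{place of G^p}, and the vanishing $\gamma_{p+2}(G) = 1$.
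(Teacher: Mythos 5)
Your proposal is correct and follows essentially the same route as the paper: pass to $\overline{G}=G/\gamma_{p+1}(G)$, invoke Theorem~\ref{class p} to get a Beauville structure on $\overline{G}$ with one triple of elements of order $p^2$, and lift it via Lemma~\ref{lifting structure}. The only difference is that you explicitly justify the order-preservation hypothesis by proving $\gamma_p(G)^p=1$ (hence $u^{p^2}=(u^p)^p\in\gamma_p(G)^p=1$ since $G^p=\gamma_p(G)$), a detail the paper asserts without proof; your argument for it, via $[a,b]^p=[a^p,b]$ for $a\in G'$ together with $\gamma_2(G)^p\leq\gamma_{p+1}(G)$ and $\gamma_{p+2}(G)=1$, is valid.
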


\begin{proof}
By Theorem \ref{class p},  $\overline{G}=G/ \gamma_{p+1}(G)$ has a Beauville structure in which one of the two triples has all elements of order $p^2$, i.e. they have the same order in both $G$ and 
$\overline{G}$. Then we can apply Lemma \ref{lifting structure} and thus $G$ is a Beauville group.
\end{proof}

We next analyze the case $\cl(G)=p$ and $|\gamma_p(G)|=p$. Recall that we have $\gamma_p(G)\leq G^p\leq \gamma_{p-1}(G)$, and thus there are two possibilities:
\begin{enumerate}
	\label{cases}
	\item $G^p= \gamma_{p-1}(G)$,
	\item  $G^p=\gamma_p(G)$.
\end{enumerate}

Observe that by Lemma \ref{order of G^p},
 $G^p$ cannot be a proper subgroup of $\gamma_{p-1}(G)$ of order $p^2$.

\begin{thm}
	Let $G$ be a group  in case (i). Then $G$ has a Beauville structure .
\end{thm}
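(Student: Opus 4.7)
The strategy is to pass to the quotient $\overline{G}=G/\gamma_p(G)$, produce a Beauville structure there by a direct analogue of the proof of Theorem~\ref{class p}, and lift via Lemma~\ref{lifting structure}. Since $\cl(G)=p$, the group $\overline{G}$ has class $p-1<p$ and is therefore regular.

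First I would pin down the structure of $\overline{G}$. Lemma~\ref{order of G^p} excludes $|G^p|=p^2$, and $|G^p|=p$ would force $\gamma_{p-1}(G)=\gamma_p(G)$ against $\cl(G)=p$; hence $|\gamma_{p-1}(G)|=p^3$ and $|\gamma_{p-1}(\overline{G})|=p^2$. Moreover $\gamma_2(G)^p\le\gamma_p(G)$ by Lemma~\ref{place of G^p}, so $\gamma_2(\overline{G})^p=1$, and hence $(\bar a\bar b)^p=\bar a^p\bar b^p$ for all $\bar a,\bar b\in\overline{G}$ (in particular the formula~(\ref{congruence of pth powers}) collapses to $(\bar x^t\bar y)^p=(\bar x^p)^t\bar y^p$ in $\overline{G}$). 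Thus the $p$th-power map is a surjective homomorphism $\overline{G}\to\overline{G}^p$ factoring through $\overline{G}/\overline{G}'\cong C_p\times C_p$; consequently $\overline{G}^p$ is elementary abelian of order $p^2$ and every element of $\overline{G}$ has order at most $p^2$.

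Next I would observe that the induced map $\overline{G}/\overline{G}'\to\overline{G}^p$ is an isomorphism, because both groups have order $p^2$ and the map is surjective. Combined with Lemma~\ref{companion} applied to $\overline{G}$ (with $\gamma_{p+1}$ replaced by the trivial subgroup), this yields a bijection between the $p+1$ maximal subgroups of $\overline{G}$ and the $p+1$ subgroups of order $p$ in $\overline{G}^p$. In particular all maximal subgroups of $\overline{G}$ have exponent $p^2$ and pairwise distinct $p$th powers, placing us directly in ``Case~1'' of the proof of Theorem~\ref{class p}. The same construction therefore produces generating pairs $\{\bar x_1,\bar y_1\}$ and $\{\bar x_2,\bar y_2\}$ forming a Beauville structure of $\overline{G}$ with $\{\bar x_1,\bar y_1,\bar x_1\bar y_1\}$ all of order $p^2$.

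Finally I would apply Lemma~\ref{lifting structure} with $N=\gamma_p(G)$. The only remaining condition is $o(u)=p^2$ for every $u\in\{x_1,y_1,x_1y_1\}$, equivalently $u^p\in\Omega_1(\gamma_{p-1}(G))$. The abelian group $\gamma_{p-1}(G)$ of order $p^3$ has $\gamma_{p-1}(G)/\gamma_p(G)\cong C_p\times C_p$, so it is either $C_p^3$ or $C_{p^2}\times C_p$; in the former case $\gamma_{p-1}(G)$ has exponent $p$ and the lift is automatic. The \emph{main obstacle} is the case $\gamma_{p-1}(G)\cong C_{p^2}\times C_p$: here a maximal subgroup $M$ of $G$ contains elements of order $p^3$ precisely when $M^p$ hits the cyclic $C_{p^2}$-summand, and such ``bad'' maximal subgroups must be avoided by $\{x_1,y_1,x_1y_1\}$. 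The point is that these bad subgroups form a bounded subfamily of the $p+1$ maximal subgroups, so for $p\geq 5$ the flexibility of Remark~\ref{modifying generators} allows the triple to be rerouted through ``good'' (exponent $p^2$) maximal subgroups while preserving the Beauville structure downstairs in $\overline{G}$. Once this is done, the verification of~(\ref{check for BS}) follows verbatim from the end of the proof of Theorem~\ref{class p}: triples of order $p$ are blocked by linear independence modulo $G'$, and triples of order $p^2$ are distinguished by the $p$th-power subgroups modulo $\gamma_p(G)$.
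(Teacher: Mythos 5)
Your overall route --- pass to $\overline{G}=G/\gamma_p(G)$, obtain a Beauville structure there from the fact that the $p{+}1$ maximal subgroups have pairwise distinct $p$th powers modulo $\gamma_p(G)$, and lift via Lemma \ref{lifting structure} --- is exactly the paper's (the paper invokes Corollary 2.6 of \cite{FG} for the quotient where you redo Case 1 of Theorem \ref{class p} by hand, which is fine). The genuine gap is in your treatment of the lifting condition when $\gamma_{p-1}(G)\cong C_{p^2}\times C_p$. There your proposed fix cannot work: you have yourself established that $M\mapsto M^p\gamma_p(G)/\gamma_p(G)$ is a bijection from the $p+1$ maximal subgroups onto the $p+1$ subgroups of order $p$ of $\gamma_{p-1}(G)/\gamma_p(G)$, and $\Omega_1(\gamma_{p-1}(G))/\gamma_p(G)$ is just \emph{one} of those subgroups. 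Hence at most one maximal subgroup $M$ can satisfy $M^p\le\Omega_1(\gamma_{p-1}(G))$; for each of the remaining $p$ maximal subgroups, every element $u$ outside $G'$ has $u^p\notin\Omega_1(\gamma_{p-1}(G))$ and therefore order $p^3$. So the ``bad'' subgroups are not a bounded subfamily --- they are all but one --- and no triple $\{x_1,y_1,x_1y_1\}$ can be routed through three ``good'' maximal subgroups; Remark \ref{modifying generators} gives you no room here, and elements of order $p^3$ violate hypothesis (ii) of Lemma \ref{lifting structure}.

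The correct resolution is that this case does not occur, and that is what needs to be proved. Since $G$ is metabelian, $G'$ is abelian, and Lemma \ref{place of G^p} gives $(G')^p\le\gamma_p(G)$, so $|(G')^p|\le p$ and $|G':\Omega_1(G')|\le p$. If $(G')^p\ne 1$, then $\Omega_1(G')$ is a characteristic subgroup of $G'$ of index $p$, hence normal in $G$; thinness places it between two consecutive terms of the lower central series, and having index exactly $p$ in $\gamma_2(G)$ forces $\gamma_3(G)\le\Omega_1(G')$ (any normal subgroup of $\gamma_2(G)$ lying strictly below $\gamma_3(G)$ would have index greater than $|\gamma_2(G):\gamma_3(G)|\ge p$). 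As $\Omega_1(G')$ is elementary abelian, so is $\gamma_3(G)$, and a fortiori $\gamma_{p-1}(G)$ for $p\ge 5$; the same conclusion holds trivially when $(G')^p=1$. Thus $\gamma_{p-1}(G)\cong C_p^3$ always, every $u\notin G'$ satisfies $u^{p^2}=(u^p)^p=1$, and the lift goes through. (This is the fact the paper uses tacitly when it asserts that all elements outside $G'$ have order $p^2$ in $G$.) With this argument substituted for your rerouting claim, your proof is complete and coincides in substance with the paper's.
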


\begin{proof}
	First of all, notice that  there exists a pair of generators $a$ and $b$ of $G$ such that $a^p$ and $b^p$ are linearly independent modulo $\gamma_p(G)$. By the Hall-Petrescu formula, we have 
	\[
	(a^tb)^p=a^{tp}b^pc_2^{\binom{p}{2}} \dots c_p^{\binom{p}{p}},
	\]
	where $c_j \in \gamma_j(\langle a^t,b\rangle)$. Since $\gamma_2(G)^p \leq \gamma_p(G)$, by Lemma \ref{place of G^p}, we get
	\[
	(a^tb)^p \equiv a^{tp}b^p \pmod{\gamma_p(G)}
	\]
	for $1\leq t \leq p-1$. Observe that, similarly to Lemma \ref{companion}, for every maximal subgroup $M$, $m\in M$ and $c\in G'$ , we have $(mc)^p \equiv m^p \pmod{\gamma_p(G)}$. It then follows that the power subgroups $M^p$ are all different modulo $\gamma_p(G)$.
	
	On the other hand, since $\overline{G}= G/ \gamma_p(G)$ is of class $p-1$, it is a regular $p$-group such that $|\overline{G}^p|=p^2$. According to Corollary 2.6 in \cite{FG}, $\overline{G}$ is a Beauville group since $p\geq5$. From the observation above, all elements outside $G'$ are of order $p^2$ in both $G$ and $\overline{G}$. Then we can apply  Lemma \ref{lifting structure} to conclude that $G$ is a Beauville group.
\end{proof}

\begin{thm}
\label{special case}
Let $G$ be a group in case (ii). Then $G$ has a Beauville structure if and only if it has at least three maximal subgroups of exponent $p$.
\end{thm}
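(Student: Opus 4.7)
The plan is to exploit the rigid power structure forced by the hypotheses. Since $\cl(G)=p$ and $G^p=\gamma_p(G)$, Lemma~\ref{place of G^p} gives $\gamma_2(G)^p\le\gamma_{p+1}(G)=1$, so $G'$ has exponent $p$; combined with Lemma~\ref{companion}, the elements of $M\smallsetminus G'$ for any maximal subgroup $M$ of $G$ share a common $p$-th power, and accordingly $M$ has exponent $p$ (equivalently $M^p=1$) or exponent $p^2$ (equivalently $M^p=\gamma_p(G)$, because $|G^p|=p$). Moreover $\gamma_p(G)=Z(G)$, since in a thin $p$-group the lower and upper central series coincide.

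For the necessity, suppose $\{x_1,y_1\},\{x_2,y_2\}$ form a Beauville structure while $G$ has at most two maximal subgroups of exponent $p$. The elements $x_i,y_i,x_iy_i$ of each triple lie in three distinct maximal subgroups (they are pairwise linearly independent modulo $\Phi(G)$), so each triple contains an element $a_i$ sitting in a maximal subgroup of exponent $p^2$. In particular $a_i$ has order $p^2$ and $\langle a_i^p\rangle=\gamma_p(G)$, so $\gamma_p(G)\le\langle a_1\rangle\cap\langle a_2\rangle\le\Sigma(x_1,y_1)\cap\Sigma(x_2,y_2)=1$, a contradiction.

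For the sufficiency, pick three maximal subgroups $M_1,M_2,M_3$ of exponent $p$. Using Remark~\ref{modifying generators} we obtain generators $x_1,y_1$ with $x_1\in M_1$, $y_1\in M_2$ and $x_1y_1\in M_3$; all three elements have order $p$. Since $G$ has $p+1$ maximal subgroups and $p\ge 5$, at least $p-2\ge 3$ of them, say $N_1,N_2,N_3$, are distinct from the $M_i$; a second application of Remark~\ref{modifying generators} yields generators $x_2,y_2$ with $x_2\in N_1$, $y_2\in N_2$ and $x_2y_2\in N_3$. To check the Beauville property, fix $a\in\{x_1,y_1,x_1y_1\}$, $b\in\{x_2,y_2,x_2y_2\}$ and $g,h\in G$, and let $M\in\{M_1,M_2,M_3\}$, $N\in\{N_1,N_2,N_3\}$ be the normal maximal subgroups containing $a$ and $b$, so $\langle a^g\rangle\le M$, $\langle b^h\rangle\le N$ and $M\ne N$. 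If $b$ has order $p$, then $\langle a^g\rangle\cap\langle b^h\rangle\le M\cap N=\Phi(G)$, and since $b^h\notin\Phi(G)$ the intersection is trivial. If $b$ has order $p^2$, the unique order-$p$ subgroup of $\langle b^h\rangle$ is $\langle b^p\rangle=\gamma_p(G)\le G'$, whereas $a^g\notin G'$ has order $p$; hence the intersection is again trivial.

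The main point to recognise up front is that the only obstruction to a Beauville structure in this regime comes from a pair of order-$p^2$ elements, one in each triple, because their cyclic spans inevitably share the central subgroup $\gamma_p(G)$. Once this is seen, the hypothesis of three maximal subgroups of exponent $p$ is exactly what is needed to force one triple to consist of elements of order $p$, and the count $p+1-3\ge 3$ valid for $p\ge 5$ lets us place the second triple in maximal subgroups disjoint from those of the first, automatically avoiding the obstruction.
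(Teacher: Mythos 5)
Your proof is correct and follows essentially the same route as the paper: the sufficiency argument (one triple of order-$p$ elements in the three exponent-$p$ maximal subgroups, a second triple in three further maximal subgroups, then the order/linear-independence check) is the paper's argument spelled out in full. The only difference is that for necessity you prove directly that two order-$p^2$ elements, one from each triple, must both contain the unique order-$p$ subgroup $G^p=\gamma_p(G)$, whereas the paper gets the same conclusion by observing that $\Omega_1(G)$ lies in at most two maximal subgroups and citing Proposition 2.4 of \cite{FG}; your version is self-contained and equally valid.
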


\begin{proof}
If the number of maximal subgroups of exponent $p$ is less than three, then $\Omega_1(G)$ is contained in the union of at most two maximal subgroups. Since $|G^p|=p$, it then follows from Proposition 2.4 in \cite{FG} that $G$ has no Beauville structure.

On the other hand, if at least three maximal subgroups have exponent $p$, then we choose a triple in which all elements have order $p$. Since $p\geq 5$, we can choose another triple such that each pair of elements in the union of the two triples is linearly independent modulo $G'$. Then $G$ has a Beauville structure.
\end{proof}

\section*{Acknowledgments}
We would like to thank G. Fern\'andez-Alcober for helpful comments and suggestions. Also, the second author would like to thank the Department of Mathematics at the University of L'Aquila for its hospitality while this research was conducted.


\begin{thebibliography}{99}

\bibitem{BBF}
N.\ Barker, N.\ Boston, and B.\ Fairbairn,
A note on Beauville $p$-groups,
\textit{Experiment.\ Math.\/} \textbf{21} (2012), 298--306.

\bibitem{bea}
A.\ Beauville,
Surfaces alg\'ebriques complexes,
\textit{Ast\'erisque\/} \textbf{54},
Soc.\ Math.\ France, Paris, 1978.

\bibitem{bos}
N.\ Boston,
A survey of Beauville $p$-groups,
in Beauville Surfaces and Groups, editors I.\ Bauer, S.\ Garion, A.\ Vdovina,
\textit{Springer Proceedings in Mathematics \& Statistics\/}, Volume 123, Springer, 2015, pp.\ 35--40.

\bibitem{BCS}
R.\ Brandl, A.\ Caranti and C.\ M.\ Scoppola,
Metabelian thin $p$-groups,
\textit{Quart.\ J.\ Math.\ Oxford\/} \textbf{43} (1992), 157--173.

\bibitem{cat}
F.\ Catanese,
Fibered surfaces, varieties isogenous to a product and related moduli spaces,
\textit{Amer.\ J.\ Math.\/} \textbf{122} (2000), 1--44.

\bibitem{FMP}
B.\ Fairbairn, K.\ Magaard, C.\ Parker,
Generation of finite quasisimple groups with an application to groups acting on Beauville surfaces,
\textit{Proc.\ Lond.\ Math.\ Soc.\/} \textbf{107} (2013), 744--798.

\bibitem{FG}
G.A.\ Fern\'andez-Alcober and \c{S}.\ G\"ul,
Beauville structures in finite $p$-groups,
\textit{J.\ Algebra\/} \textbf{474} (2017), 1--23.

\bibitem{FG2}
G.A.\ Fern\'andez-Alcober, N.\ Gavioli, \c{S}.\ G\"ul and C.\ Scoppola,
Beauville $p$-groups of wild type and groups of maximal class,
preprint.

\bibitem{FJ}
Y.\ Fuertes and G.A.\ Jones,
Beauville surfaces and finite groups,
\textit{J.\ Algebra\/} \textbf{340} (2011), 13--27.

\bibitem{GAP}
The GAP Group, GAP- Groups, Algorithms, and Programming, Version 4.7.5; 2014 \texttt{http://www.gap-system.org}.

\bibitem{GM}
R.\ Guralnick, G.\ Malle,
Simple groups admit Beauville structures,
\textit{Journal of the London Mathematical Society\/} \textbf{85} (2012), 694--721.

\bibitem{hup}
B.\ Huppert,
\textit{Endliche Gruppen, I\/},
Springer, 1967.

\bibitem{wun}
H.\ Meier-Wunderli,
Metabelsche Gruppen,
\textit{Comment.\ Math.\ Helv.\/} \textbf{25} (1951), 1--10.

\bibitem{mie}
R.J.\ Miech,
Metabelian $p$-groups of maximal class,
\textit{Trans.\ Amer. \ Math.\ Soc.\/} \textbf{152} (1970), 331--373.


\end{thebibliography}
\end{document}